\documentclass[12pt]{article}

\usepackage{amsmath,amsthm,amsfonts,amssymb, color,graphicx}


\newtheorem{theorem}{Theorem}[section]

\newtheorem{example}[theorem]{Example}
\newtheorem{examples}[theorem]{Examples}

\newtheorem{lemma}[theorem]{Lemma}

\newtheorem{remark}[theorem]{Remark}

\allowdisplaybreaks[4]

\def\bD{\mathbb D}
\def\bE{\mathbb E}
\def\bN{\mathbb N}
\def\bP{\mathbb P}
\def\bR{\mathbb R}

\def\cB{\mathcal B}
\def\cD{\mathcal D}
\def\cF{\mathcal F}
\def\cH{\mathcal H}

\def\cA{\mathcal{A}}
\def\cD{\mathcal{D}}

\def\cF{\mathcal{F}}
\def\cG{\mathcal{G}}

\def\cH{\mathcal{H}}
\def\cP{\mathcal{P}}

\def\cS{\mathcal{S}}
\def\cT{\mathcal{T}}

\def\fX{\mathfrak{X}}

\def\bD{\mathbb{D}}
\def\bE{\mathbb{E}}
\def\bR{\mathbb{R}}

\def\e{\varepsilon}

\topmargin -0.4in
\headsep 0.4in
\textheight 9.0in
\oddsidemargin 0.02in
\evensidemargin 0.1in
\textwidth 6.3in

\begin{document}

\title{Spatial integral of the solution to
hyperbolic Anderson model
with time-independent noise}

\author{Raluca M. Balan\footnote{Corresponding author. University of Ottawa, Department of Mathematics and Statistics, STEM Building, 150 Louis-Pasteur Private,
Ottawa, ON, K1N 6N5, Canada. E-mail: rbalan@uottawa.ca. Research
supported by a grant from the Natural Sciences and Engineering
Research Council of Canada.} \and   Wangjun Yuan\footnote{University of Ottawa, Department of Mathematics and Statistics, STEM Building, 150 Louis-Pasteur Private,
Ottawa, ON, K1N 6N5, Canada. E-mail: wyuan2@uottawa.ca.}}

\date{January 15, 2022}
\maketitle

\begin{abstract}
\noindent In this article, we study the asymptotic behavior of the spatial integral of the solution to the hyperbolic Anderson model in dimension $d\leq 2$, as the domain of the integral gets large (for fixed time $t$). This equation is driven by a spatially homogeneous Gaussian noise, whose covariance function is either integrable, or is given by the Riesz kernel. The novelty is that the noise does not depend on time, which means that It\^o's martingale theory for stochastic integration cannot be used. Using a combination of Malliavin calculus with Stein's method, we show that with proper normalization and centering, the spatial integral of the solution converges to a standard normal distribution, by estimating the speed of this convergence in the total variation distance. We also prove the corresponding functional limit theorem for the spatial integral process.
\end{abstract}

\medskip
\noindent {\bf Mathematics Subject Classifications (2020)}:
Primary 60H15; Secondary 60H07, 60G15, 60F05

\medskip
\noindent {\bf Keywords:} hyperbolic Anderson model, spatially homogeneous Gaussian noise, Malliavin calculus, Stein's method for normal approximations

\section{Introduction}

Stochastic partial differential equations (SPDEs) are mathematical models used for physical phenomena which are subject to random perturbations. These perturbations are typically described by a collection of random variables which constitute the noise. In general, the existence and behaviour of the solution of an SPDE is strongly influenced by the temporal component of the noise, which usually resembles a well-understood classical process, such as Brownian motion, fractional Brownian motion, or a L\'evy process. But it is also possible that the noise has no temporal component at all, which will be the case for the equation considered in the present article.

In the recent years, there has been a lot of interest in studying the asymptotic behavior as $R \to \infty$ of the spatial integral $I_R(t)=\int_{B_R} u(t,x)dx$ associated with the random field solution $u$ of an SPDE (as defined in Walsh' lecture notes \cite{walsh86}), where $B_R$ is the ball of radius $R$ centered at $0$.
This line of investigations was initiated in article \cite{HNV20} for the stochastic heat equation in dimension $d=1$, driven by a spatially homogeneous Gaussian noise which behaves in time like Brownian motion (i.e. it is white in time).
In this equation, the initial condition is constant and the noise is multiplied by $\sigma(u)$, where $\sigma$ is a Lipschitz function. Combining tools from Malliavin calculus with Stein's method for normal approximations, the authors of \cite{HNV20} proved that for fixed $t>0$, $I_R(t)$ converges (with proper normalization and centering) to the $N(0,1)$ distribution as $R \to \infty$, by estimating the speed of this convergence in the total variation distance. They also proved the corresponding functional limit theorem for the integral process $\{I_R(t);t\geq 0\}$ in the space $C[0,\infty)$ of continuous functions on $[0,\infty)$.
Further developments and extensions for the heat equation with the same type of noise can be found in \cite{CKNP1,CKNP2,HNVZ}. The parabolic Anderson model (corresponding to the case $\sigma(u)=u)$ with the same noise and delta initial condition was studied in \cite{CKNP3}. The same problem for the fractional heat equation (in which the Laplacian is replaced by its fractional power) has been considered in \cite{ANTV}. The case of the parabolic Anderson model driven by a Gaussian noise colored in time was treated in \cite{NZ-EJP,NXZ}, and the same model with rough noise in space appeared in \cite{NSZ}.

There are several articles dedicated to this problem for the stochastic wave equation in dimension $d\leq 2$, with constant initial condition and a Lipschitz function $\sigma(u)$ multiplying the noise. The case of the white noise in time with spatial covariance given by the Riesz kernel was considered in \cite{DNZ20,BNZ21} for $d=1$ and $d=2$ respectively, while the case of an integrable spatial covariance function was studied in \cite{NZ20}. The hyperbolic Anderson model (for which $\sigma(u)=u$) with the colored noise in time was examined in \cite{BNQZ}.

\medskip

In this article, we consider the following hyperbolic Anderson model:
\begin{align}
\label{ham2}
	\begin{cases}
		\dfrac{\partial^2 u}{\partial t^2} (t,x)
		= \Delta u(t,x) + u(t,x) \dot{W}(x), \
		t>0, \ x \in \bR^d, (d\leq 2)\\
		u(0,x) = 1, \ \dfrac{\partial u}{\partial t} (0,x) = 0.
	\end{cases}
\end{align}
The novelty stems from the fact that the noise is time-independent, i.e. it is given by a zero-mean Gaussian process $\{W(\varphi);\varphi \in \cD(\bR^d)\}$
with covariance:
\[
\bE[W(\varphi)W(\psi)]
=\int_{(\bR^d)^2} \gamma(x-y) \varphi(x) \psi(y)dxdy
=:\langle \varphi,\psi\rangle_{\cP_0},
\]
where $\cD(\bR^d)$ is the space of infinitely differentiable functions on $\bR^d$, with compact support. We assume that the noise is defined on a complete probability space $(\Omega,\cF,\bP)$, and
we denote by $\|\cdot\|_p$ the $L^p(\Omega)$-norm for any $p>0$. We will work with a ``strong'' solution, i.e. a solution defined on this fixed probability space.

The parabolic Anderson model with Gaussian time-independent noise as above:
\begin{align}
\label{pam}
	\begin{cases}
		\dfrac{\partial u}{\partial t} (t,x)
		= \frac{1}{2}\Delta u(t,x) + u(t,x) \dot{W}(x), \
		t>0, \ x \in \bR^d, (d\geq 2)\\
		u(0,x) = 1, 
	\end{cases}
\end{align}
appeared for the first time in article \cite{hu01}, the solution being defined in the Skorohod sense. H\"older continuity of the sample paths and exponential bounds for the moments (known as intermittency properties) were obtained in \cite{HHNT} of both Skorohod and Stratonovich solutions using their respective Feynman-Kac representations. The exact asymptotic behaviour of the moments of the Skorohod solution for the same model was obtained in \cite{xchen17} (taking $\alpha_0=0$), under additional assumptions on $\gamma$. New properties of the Skorohod solution of \eqref{pam} in dimension $d=1$ have been recently discovered in \cite{kim-scorolli, scorolli}.

By contrast, the hyperbolic model \eqref{ham2} is far less studied. In fact, we are aware of only two references for this model, both of them quite recent and involving the first author of this paper. More precisely, the exact asymptotic behaviour of the moments of the Skorohod solution of \eqref{ham2} has been obtained in \cite{BCC} under the same assumptions on $\gamma$ as in \cite{xchen17}, while the Stratonovich solution of equation \eqref{ham2} has been examined in \cite{B21}. 

\medskip

We give now few more details about the noise $W$.
We assume that the function $\gamma:\bR^d \to [0,\infty]$ is non-negative-definite in the sense of distributions. By the Bochner-Schwartz theorem, there exists a tempered measure $\mu$ on $\bR^d$ such that  $\gamma=\cF \mu$ in $\cS'(\bR^d)$:
\[
\int_{\bR^d} \varphi(x) \gamma(x)dx=\int_{\bR^d}\cF \varphi(\xi)\mu(d\xi), \quad \mbox{for any $\varphi\in \cS(\bR^d)$}.
\]
We say that $\mu$ is the {\em spectral measure} of $\gamma$ (and of the noise $W$). Consequently,
\[
\int_{\bR^d} \int_{\bR^d} \varphi(x) \psi(y)\gamma(x-y)dxdy=\int_{\bR^d}\cF \varphi(\xi) \overline{\cF \psi(\xi)}\mu(d\xi), \quad \mbox{for any $\varphi,\psi \in \cS(\bR^d)$}.
\]
Here $\cF \varphi(\xi)=\int_{\bR^d}e^{-i \xi \cdot x}\varphi(x)dx$ is the Fourier transform of $\varphi$ and $\cS(\bR^d)$ is the set of rapidly decreasing functions. 
If $\mu(\bR^d)<\infty$, then $\gamma(x)=\int_{\bR^d}e^{-i\xi \cdot x}\mu(d\xi)$ for any $x \in \bR^d$.

The noise is stationary (or homogeneous) in space, i.e. the covariance of the random distribution
$\{W(\varphi);\varphi \in \cD(\bR^d)\}$ is invariant under translations:
\[
\bE[W(\tau_{h}\varphi)W(\tau_h \psi)]=\bE[W(\varphi)W(\psi)] \quad \mbox{for any} \quad h \in \bR^d,
\]
where $(\tau_h \varphi)(x)=\varphi(x+h)$ for all $x \in \bR^d$. The concept of stationary random distribution (not necessarily Gaussian) goes back to the 1950's, as it was introduced by It\^o in \cite{ito54} for $d=1$, and was extended to $d\geq 1$ by Yaglom in \cite{yaglom57}, who called it a ``homogeneous generalized random field''.
In the Gaussian case, this type of covariance structure became very popular for the noise perturbing an SPDE only after the publication of Dalang's seminal article \cite{dalang99}, in which the noise is white in time.

We assume that the spectral measure $\mu$ of the noise satisfies {\em Dalang's condition}:
\begin{equation}
\label{D-cond}
C_{\mu}:=\int_{\bR^d}\frac{1}{1+|\xi|^2}\mu(d\xi)<\infty.
\tag{D}
\end{equation}
Note that this condition always holds for $d=1$ (see Remark 10 of \cite{dalang99}).

\medskip

Below are some examples of pairs $(\gamma,\mu)$. In these examples, $\mu$ has density function $g$.

\begin{examples}
\label{ex}
{\rm
\begin{enumerate}
\item {\em (Heat kernel)} $\gamma(x)=(2\pi a)^{-d/2}e^{-|x|^2/(2a)}$, $g(\xi)=e^{-a|\xi|^2}$ $(a>0)$
\item {\em (Poisson kernel)}
$\gamma(x)=c_d a (a^2+|x|^2)^{-\frac{d+1}{2}}$, $g(\xi)=e^{-a|\xi|}$
$(a>0)$

\item {\em (Riesz kernel)}
$\gamma(x)=|x|^{-\beta}$, $g(\xi)=C_{d,\beta}|\xi|^{-(d-\beta)}$
$(\beta \in (0,d))$

\item {\em (Bessel kernel)}
$\gamma(x)=\frac{1}{\Gamma(\alpha)}\int_0^{\infty}
t^{\alpha-1}(4\pi t)^{-d/2}e^{-t-|x|^2/(4t)} dt$, $g(\xi)=(1+|\xi|^2)^{-\alpha/2}$ \\
($\alpha>0$)
\item {\em (Fractional kernel)}
$\gamma(x)=\prod_{i=1}^{d}\alpha_{H_i}|x_i|^{2H_i-2}$, $g(\xi)=\prod_{i=1}^{d}c_{H_i} |\xi_i|^{1-2H_i}$ with
$x=(x_1,\ldots,x_d)$, $\xi=(\xi_1,\ldots,\xi_d)$
$\alpha_{H}=H(2H-1)$, $c_{H}=\frac{\Gamma(2H+1)\sin(\pi H)}{2\pi}$
$(H_i  \in (\frac{1}{2},1))$. Then $\{W(x)=W(1_{[0,x]})\}_{x \in \bR^d}$ is a fractional Brownian sheet with indices $H_1,\ldots,H_d$.
\end{enumerate}
}
\end{examples}

Let $\cP_0$ be the completion of $\cD(\bR^d)$ with respect to 
$\langle \cdot, \cdot \rangle_{\cP_0}$. By the isometry property, the map $\cD(\bR^d) \ni \varphi \mapsto W(\varphi)\in L^2(\Omega)$ can be extended $\cP_0$. Then $W=\{W(\varphi);\varphi \in \cP_0\}$ is an isonormal Gaussian process as in Malliavin calculus (see \cite{nualart06}).

The Hilbert space $\cP_0$ may contain tempered distributions. By Theorem 3.5 of \cite{BGP12}, if $\mu$ has density function $g$, then
\[
\cP_0 \subset \mathcal{U}_0:=\{S \in \cS'(\bR^d); \cF S \ \mbox{is a function}, \int_{\bR^d}|\cF S(\xi)|^2 \mu(d\xi)<\infty \},
\]
and $\cP_0=\mathcal{U}_0$ if $1/g$ is tempered, i.e. $\int_{g>0} (1+|\xi|^2)^{-k} [g(\xi)]^{-1}d\xi<\infty$ for some $k \in \bN$. In Example \ref{ex}.3, $\cP_0 \subset H^{-(d-\beta)/2}(\bR^d)$, where $H^{r}(\bR^d)$ is the fractional Sobolev space of order $r$, and (D) holds if and only if $\beta<2$. In Example \ref{ex}.4, $\cP_0=H^{-\alpha/2}(\bR^d)$, and (D) holds if and only if $d-\alpha<2$.

\begin{example}[{\em white noise}]
{\rm
We consider also the case when $W$ is white noise, i.e. $$\bE[W(\varphi)W(\psi)]=\int_{\bR^d}\varphi(x)\psi(x)dx.$$ In this case, $\cP_0=L^2(\bR^d)$, $\gamma=\delta_0$ (formally), $\mu(d\xi)=(2\pi)^{-d}d\xi$ and
$\{W(x)=W(1_{[0,x]})\}_{x \in \bR^d}$ is a Brownian sheet. Formally, the white noise case corresponds to Example \ref{ex}.3 with $\beta=d$. Obviously, \eqref{D-cond} holds if and only if $d=1$; in this case, $C_{\mu}=1/2$.
}
\end{example}

We now introduce the concept of solution.
A process $u=u(t,x);t\geq 0,x \in \bR^d\}$ is a (Skorohod) {\bf solution} to equation \eqref{ham2} if it satisfies the following integral equation:
\[
u(t,x)=1+\int_0^t \int_{\bR^d}G_{t-s}(x-y)u(s,y)W(\delta y)ds,
\]
where $W(\delta y)$ denotes the Skorokod integral with respect to $W$, and $G$ is the fundamental solution to the deterministic wave equation with the same initial conditions as \eqref{ham2}:
\begin{align}
\label{Green}
G_t(x) :=
\begin{cases}
	\dfrac{1}{2} \mathbf{1}_{\{ | x| < t\}} \quad &\text{if $d=1$}; \\
	\dfrac{1}{2\pi  \sqrt{ t^2 - |x|^2}}  \mathbf{1}_{\{ |x| < t\}} \quad &\text{if $d=2$},
\end{cases}
\end{align}
for any $t>0$ and $x \in \bR^d$, with $|\cdot|$ being the Euclidean norm.

The goal of the present paper is to investigate the asymptotic behaviour as $R \to \infty$ of the centered spatial integral:
\[
F_{R}(t)=\int_{B_R} \big(u(t,x)-1\big)dx,
\]
where $B_R=\{x \in \bR^d;|x|<R\}$. Letting $\sigma_{R}^2(t)={\rm Var}\big(F_R(t)\big)$, we will show that
\[
\frac{F_R(t)}{\sigma_R(t)} \stackrel{d}{\to} Z \sim N(0,1) \quad \mbox{as $R \to \infty$},
\]
by estimating the speed of this convergence in the total variation distance $d_{TV}$. Recall that $d_{\rm TV}(X,Y)=\sup_{B \in \cB(\bR)}|\mu_X(B)-\mu_Y(B)|$ for random variables $X,Y$ with respective laws $\mu_X,\mu_Y$, and $d_{\rm TV}(X_n,X) \to 0$ as $n\to \infty$ implies that $X_n \stackrel{d}{\to} X$ as $n\to \infty$.

In order to do this, we follow the same general strategy as in \cite{BNQZ}, namely we first identify the order of magnitude of $\sigma_R^2(t)$, and then use the bound given by Proposition 1.8 of \cite{BNQZ} for the distance $d_{TV}(F_R(t)/\sigma_R(t),Z)$, which is valid also for the time-independent noise.\footnote{
A different (but longer) argument for estimating the distance $d_{TV}(F_R(t)/\sigma_R(t),Z)$ can be found in the first version of this article (available on arXiv:2201.02319). This argument is based on the classical Stein-Malliavin bound (as in the original article  \cite{HNV20}) and illustrates the challenges of working with the time-independent noise compared with the white noise in time. For instance, there is no Clarke-Ocone formula for the noise $W$, and key results from It\^o's martingale theory (such as Burkholder-Davis-Gundy inequality) cannot be used simply because there is no martingale.}

A key idea, which is common to all references who studied this problem, is to show that the moments of the first and second Malliavin derivatives of $u(t,x)$ are dominated, respectively, by the first two chaos kernels $f_1(\cdot,x;t)$ and $f_2(\cdot,x;t)$ which appear in the chaos expansion of the solution. We will achieve this too, in relations \eqref{D-bound} and \eqref{bound-D2} below.

\medskip

When $d=2$, we will impose the following hypothesis:
\begin{align*}
{\bf (H1)} \begin{cases}
  &\text{(\texttt{a})   $\gamma\in L^\ell(\bR^2)$ for some $\ell\in(1,\infty)$; \mbox{or}}
  \\
& \text{(\texttt{b})   $\gamma(x)= |x|^{-\beta}$ for some $\beta\in(0,2)$,}
\end{cases}
\end{align*}

Under this assumption, if we define the constant $q\in(1/2, 1)$ by
\begin{align}
\label{def-q}
q=
\begin{cases}
\ell/(2\ell-1) & \mbox{in case  \rm (\texttt{a})}, \\
2/(4-\beta)& \mbox{in case \rm (\texttt{b})}.
\end{cases}
\end{align}
then $L^{2q}(\bR^{2n}) \subset \cP_{0}^{\otimes n}$ and for any $f,g \in L^{2q}(\bR^{2n})$,
\begin{equation}
\label{embed}
\langle f,g \rangle_{\cP_0^{\otimes n}} \leq C^n \|f\|_{L^{2q}(\bR^{nd})} \|g\|_{L^{2q}(\bR^{nd})}
\end{equation}
where the constant $C>0$ depends only on $\gamma$ (see Lemma 2.3.(1) of \cite{BNQZ}). This inequality will play an important role in the present paper.
In the case $d=1$, we do not need a hypothesis similar to {\bf (H1)}, since the function $G$ has a very simple form.

\medskip

It can be proved that for any $t>0$ and $s>0$ fixed, the covariance
\[
\bE[(u(t,x)-1)(u(s,y)-1)]:=\rho_{t,s}(x-y)
\]
is non-negative and depends only on $x-y$ (see Remark \ref{rem-cov} below). In particular, $\{u(t,x)\}_{x\in \bR^d}$ is a positively-correlated
stationary process with covariance $\rho_t=\rho_{t,t}$.

\medskip

We give few comments about the notation. We write $f(R) \sim g(R)$ if $f(R)/g(R) \to 1$ as $R \to \infty$. We let $\omega_d$ be the Lebesque measure of $B_1$, i.e. $\omega_1=2$ and $\omega_2=\pi$ if $d=2$. We denote by $C[0,\infty)$ the space of continuous functions $f:[0,\infty) \to \bR$, equipped with the uniform convergence on compact sets.

\medskip

We are now ready to state the main results of this article, which correspond to the two cases of Hypothesis {\bf (H1)} when $d=2$.

The first result covers the case when $\gamma$ is an integrable function, or the noise is white (in space) and $d=1$. The analogue result for the white noise in time is given in \cite{NZ20}.

\begin{theorem}
\label{CLT-integr}
Suppose that $\gamma$ is non-negative and non-negative definite, $\gamma \in L^{1}(\bR^d)$ and the spectral measure $\mu$ satisfies (D), or the noise is white and $d=1$. If $d=2$, in parts (ii)-(iii) below, we assume in addition that $\gamma \in L^{\ell}(\bR^2)$ for some $\ell>1$. Then:

(i) for any $t>0$ and $s>0$,
\[
\bE[F_R(t)F_R(s)] \sim K(t,s) R^{d} \ as \ R \to \infty, \ where \
K(t,s):=\omega_d\int_{\bR^d}\rho_{t,s}(z)dz<\infty,
\]
and in particular $\sigma_{R}^2(t) \sim K(t,t) R^{d}$ as $R \to \infty$;

(ii) for any $t>0$,
\[
d_{\rm TV}\left( \frac{F_R(t)}{\sigma_R(t)},Z\right) \leq C_t R^{-d/2},
\]
where $C_{t}>0$ is a constant depending on $t$;

(iii) there exists a continuous modification of the process $\{R^{-d/2} F_R(t)\}_{t\geq 0}$ which converges in distribution in $C[0,\infty)$ as $R \to \infty$, to a zero-mean Gaussian process $\{\cG(t)\}_{t\geq 0}$ with covariance
$ \bE[\cG(t)\cG(s)]=K(t,s)$.
\end{theorem}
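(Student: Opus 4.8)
The plan is to follow the three-step Malliavin--Stein strategy, handling the three parts in sequence and using the chaos-kernel domination of the Malliavin derivatives as the main technical input. For part (i), I would start from the chaos expansion $u(t,x)-1=\sum_{n\geq 1}I_n(f_n(\cdot,x;t))$ and write $\bE[F_R(t)F_R(s)]=\int_{B_R}\int_{B_R}\rho_{t,s}(x-y)\,dx\,dy$, where by stationarity $\rho_{t,s}$ depends only on $x-y$. The key point is that $\rho_{t,s}\in L^1(\bR^d)$: this is where integrability of $\gamma$ (or the white-noise $d=1$ case) enters, via the series $\rho_{t,s}(z)=\sum_n n!\,\langle f_n(\cdot,x;t),f_n(\cdot-z\cdot \mathbf{1},y;s)\rangle_{\cP_0^{\otimes n}}$ and the bound $\langle f_n,g_n\rangle_{\cP_0^{\otimes n}}\leq (C_\mu\,\text{const})^n/n!$-type estimates already available for the hyperbolic Anderson model. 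Once $\int_{\bR^d}\rho_{t,s}(z)\,dz<\infty$ is known, the asymptotics $\bE[F_R(t)F_R(s)]\sim \omega_d R^d\int_{\bR^d}\rho_{t,s}(z)\,dz$ follows from the standard change of variables $z=x-y$ and dominated convergence, since $|B_R\cap(B_R-z)|/|B_R|\to 1$ for each fixed $z$.

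For part (ii), I would invoke Proposition 1.8 of \cite{BNQZ} (valid also for time-independent noise), which bounds $d_{\rm TV}(F_R(t)/\sigma_R(t),Z)$ by a constant times $\sigma_R(t)^{-2}$ times quantities of the form $\big(\bE\!\int_{(\bR^d)^4}|D_x u(t,a)|\,|D_y u(t,b)|\,|D_{x'}u(t,a')|\,|D_{y'}u(t,b')|\cdot\{\text{covariance factors}\}\big)^{1/2}$ integrated over $a,b,a',b'\in B_R$. The crucial inputs are the pointwise bounds \eqref{D-bound} and \eqref{bound-D2}, which dominate $\bE|D_z u(t,x)|^p$ and the second-derivative moments by the first two chaos kernels $f_1(\cdot,x;t)$ and $f_2(\cdot,x;t)$. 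Substituting these and using the covariance inequality \eqref{embed} (in case $d=2$, which is exactly why $\gamma\in L^\ell$ is assumed there) reduces everything to deterministic integrals of products of $G$ against $\gamma$ over $B_R$; counting the number of free spatial integrations against the $R^d$ normalization from $\sigma_R^2(t)\sim K(t,t)R^d$ yields the rate $R^{-d/2}$. I expect this bookkeeping — tracking which of the four ``root points'' are tied together by a $\gamma$ factor versus free to roam in $B_R$, and checking that the worst term still gives $R^{-d/2}$ — to be the main obstacle, and the place where the $L^\ell$ versus Riesz dichotomy of {\bf (H1)} has to be carried carefully.

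For part (iii), I would establish tightness of $\{R^{-d/2}F_R(t)\}_{t\geq 0}$ in $C[0,\infty)$ via a Kolmogorov-type moment bound $\bE|R^{-d/2}(F_R(t)-F_R(s))|^p\leq C_p|t-s|^{\alpha}$ for some $p$ large and $\alpha>1$, uniformly in $R$ on compacts; this uses hypercontractivity on Wiener chaos to pass from $L^2$ (controlled by the increments of the kernels $f_n$ in $t$) to $L^p$, together with the same chaos-norm estimates as in part (i). Combined with convergence of finite-dimensional distributions — which follows from part (ii) applied to arbitrary linear combinations $\sum_j \lambda_j F_R(t_j)$ (the Malliavin--Stein bound upgrades to the multivariate setting), plus the covariance convergence from part (i) identifying the limit covariance as $K(t,s)$ — this gives convergence in law in $C[0,\infty)$ to the Gaussian process $\{\cG(t)\}$. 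The limit is Gaussian because each component of the normalized vector is asymptotically normal and the covariances converge; continuity of $\cG$ follows from the same increment estimate passed to the limit.

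Overall, the architecture is routine given the cited tools; the genuine work is in (a) proving $\rho_{t,s}\in L^1(\bR^d)$ from the chaos expansion and the Dalang condition, and (b) the term-by-term estimate in part (ii) showing the quadruple spatial integral in the Stein bound is $O(R^{d})$ rather than $O(R^{2d})$, which is what makes the normalized distance decay like $R^{-d/2}$.
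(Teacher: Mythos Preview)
Your outline matches the paper's proof closely: part (i) via stationarity, the change of variables $z=x-y$, and dominated convergence once $\rho_{t,s}\in L^1(\bR^d)$ is shown from the chaos expansion; part (ii) via Proposition~1.8 of \cite{BNQZ} together with the derivative bounds \eqref{D-bound} and \eqref{bound-D2}; and part (iii) via a Kolmogorov increment bound (hypercontractivity on the chaos kernels) plus multivariate convergence.

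Two small clarifications where your sketch drifts from the paper. First, the quantity $\cA$ coming out of Proposition~1.8 is a six-fold spatial integral involving \emph{two second} derivatives and \emph{two first} derivatives (the norms $\|D^2_{z,w}F_R\|_4\,\|D^2_{y,w'}F_R\|_4\,\|D_{z'}F_R\|_4\,\|D_{y'}F_R\|_4$), not four first derivatives as you wrote; after inserting \eqref{D-bound} and \eqref{bound-D2} and expanding $\widetilde f_2$, the paper splits $\cA$ into four terms $\cA_1,\ldots,\cA_4$ and integrates the ten space variables one at a time using only $\int G_t=t$ and $\|\gamma\|_{L^1}<\infty$, leaving a single free $B_R$-integral that gives the $O(R^d)$. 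The $L^\ell$ hypothesis and \eqref{embed} are used \emph{inside} the proofs of the derivative bounds (Theorems~\ref{main-th1}--\ref{main-th2}), not in this final bookkeeping step. Second, for finite-dimensional convergence the paper does not reduce to one-dimensional linear combinations; it applies the multivariate Stein bound (Theorem~6.1.2 of \cite{NP}) together with Proposition~1.9 of \cite{BNQZ} to show ${\rm Var}\big(\langle DF_R(t_i),-DL^{-1}F_R(t_j)\rangle_{\cP_0}\big)\leq CR^d$, which is the same deterministic estimate as for $\cA$. Your linear-combinations route would also work, but is not what the paper does.
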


The second result covers the case when $\gamma$ is the Riesz kernel.
The counterpart of this result for the white noise in time can be found in \cite{DNZ20,BNZ21} for $d=1$, respectively $d=2$.

\begin{theorem}
	\label{CLT-Riesz}
	Suppose that $\gamma(x)=|x|^{-\beta}$ where $\beta \in (0,d \wedge 2)$. Then:

	(i) for any $t>0$ and $s>0$
	\[
\bE[F_R(t)F_R(s)] \sim K'(t,s) R^{2d-\beta}  \ as \ R \to \infty, \ where \
	K'(t,s) := \dfrac{t^2 s^2}{4} \int_{B_1^2} |x-x'|^{-\beta} dxdx',
\]
and in particular $\sigma_{R}^2(t) \sim K'(t,t) R^{2d-\beta}$;

	(ii) for any $t>0$,
	\[
	d_{\rm TV}\left( \frac{F_R(t)}{\sigma_R(t)},Z\right) \leq C_t' R^{-\beta/2}
\]
where $C_t'>0$ is a constant depending on $t$;

	(iii) there exists a continuous modification of the process $\{R^{-d+\beta/2} F_R(t)\}_{t\geq 0}$ which converges in distribution in $C[0,\infty)$  as $R \to \infty$, to a zero-mean Gaussian process $\{\cG(t)\}_{t\geq 0}$ with covariance
$\bE[\cG(t)\cG(s)]= K'(t,s)$.
\end{theorem}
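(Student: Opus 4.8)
The plan is to follow the Malliavin--Stein scheme outlined in the Introduction, exploiting that in the Riesz case the entire asymptotics is carried by the first Wiener chaos of $u(t,x)$. Write $u(t,x)=1+\sum_{n\ge1}I_n(f_n(\cdot,x;t))$ for the chaos expansion, where $I_n$ is the $n$-th multiple integral with respect to $W$ and $f_n(\cdot,x;t)\in\cP_0^{\otimes n}$ is the symmetric, time-independent $n$-th kernel, an explicit time-integral of products of the wave kernel $G$ of \eqref{Green}. Put $\Phi_t(x):=\int_0^t G_s(x)\,ds$, so that $f_1(x_1;x,t)=\Phi_t(x-x_1)$, $\Phi_t$ is supported in $B_t$, and $\int_{\bR^d}\Phi_t(x)\,dx=t^2/2$. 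Setting $g_{n,R}(\cdot;t):=\int_{B_R}f_n(\cdot,x;t)\,dx$, one has $\bE[F_R(t)F_R(s)]=\sum_{n\ge1}n!\,\langle g_{n,R}(\cdot;t),g_{n,R}(\cdot;s)\rangle_{\cP_0^{\otimes n}}$. For part (i) the $n=1$ term equals $\int_{B_R^2}\!\int_{(\bR^d)^2}\Phi_t(x-z)\Phi_s(y-z')|z-z'|^{-\beta}\,dz\,dz'\,dx\,dy$; since $\Phi_t,\Phi_s$ are compactly supported with integrals $t^2/2$, $s^2/2$ and $|\cdot|^{-\beta}$ is locally integrable ($\beta<d$), for $|x-y|\to\infty$ the inner double integral is asymptotic to $\tfrac{t^2s^2}{4}|x-y|^{-\beta}$, and the substitution $x=Ru$, $y=Rv$ then gives the $n=1$ term $\sim\tfrac{t^2s^2}{4}R^{2d-\beta}\int_{B_1^2}|u-v|^{-\beta}\,du\,dv=K'(t,s)R^{2d-\beta}$. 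It remains to show $\sum_{n\ge2}n!\,\|g_{n,R}(\cdot;t)\|_{\cP_0^{\otimes n}}^2=o(R^{2d-\beta})$, which uses the compact support of $f_n$ in all its spatial arguments (so $g_{n,R}$ is supported on $\{x_1\in B_{R+nt}\}$ and localized in the remaining variables), the homogeneity $|\lambda z|^{-\beta}=\lambda^{-\beta}|z|^{-\beta}$, and, when $d=2$, the embedding \eqref{embed}; the key fact is that each $n\ge2$ term has order at most $R^{d}\bigl(1+R^{d-2\beta}\vee\log R\bigr)=o(R^{2d-\beta})$, while the factorial decay of $n!\,\|f_n(\cdot,x;t)\|_{\cP_0^{\otimes n}}^2$ (standard for the hyperbolic Anderson model) makes the series over $n$ converge.

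For part (ii) we invoke Proposition 1.8 of \cite{BNQZ} (valid for the time-independent noise, as noted in the Introduction), which provides a bound $d_{\rm TV}\!\bigl(F_R(t)/\sigma_R(t),Z\bigr)\le C\,\sigma_R(t)^{-2}\sqrt{\mathcal{Q}_R}$, where $\mathcal{Q}_R$ is a deterministic quantity controlled by the $L^p(\Omega)$-norms of the first and second Malliavin derivatives $Du(t,x)$ and $D^2u(t,x)$ integrated against products of $\gamma$ over $B_R$. Dominating these norms by the chaos kernels $f_1(\cdot,x;t)=\Phi_t(x-\cdot)$ and $f_2(\cdot,x;t)$ via \eqref{D-bound} and \eqref{bound-D2}, and then estimating the resulting convolutions with the Riesz scaling (and, for $d=2$, the embedding \eqref{embed} with $q=2/(4-\beta)$), one shows $\mathcal{Q}_R$ is of order $R^{4d-3\beta}$. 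Since $\sigma_R(t)^2\sim K'(t,t)R^{2d-\beta}$, this yields $d_{\rm TV}\!\bigl(F_R(t)/\sigma_R(t),Z\bigr)\le C_t R^{-(2d-\beta)}\bigl(R^{4d-3\beta}\bigr)^{1/2}=C_t R^{-\beta/2}$.

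For part (iii), finite-dimensional convergence follows from the Cram\'er--Wold device: for $t_1,\dots,t_k>0$ and reals $a_1,\dots,a_k$, the combination $\sum_i a_iF_R(t_i)=\int_{B_R}\sum_i a_i(u(t_i,x)-1)\,dx$ has exactly the structure of $F_R(t)$, so the one-dimensional arguments of parts (i)--(ii) give, after normalization by $R^{-d+\beta/2}$, convergence to $N\!\bigl(0,\sum_{i,j}a_ia_jK'(t_i,t_j)\bigr)$; by part (i) the limiting covariance of $\{R^{-d+\beta/2}F_R(t)\}$ is $(K'(t_i,t_j))$. For tightness in $C[0,\infty)$ we use Kolmogorov's criterion, for which it suffices to bound $\bigl\|R^{-d+\beta/2}(F_R(t)-F_R(s))\bigr\|_{L^p(\Omega)}\le C_{p,T}|t-s|$ for $s,t\in[0,T]$, uniformly in $R$. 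Applying the hypercontractivity inequality term by term to the chaos expansion of $F_R(t)-F_R(s)$ (together with the factorial decay from part (i)) reduces this to the first chaos, i.e.\ to $R^{-d+\beta/2}\|g_{1,R}(\cdot;t)-g_{1,R}(\cdot;s)\|_{\cP_0}$ with $g_{1,R}(\cdot;t)-g_{1,R}(\cdot;s)=\mathbf{1}_{B_R}*(\Phi_t-\Phi_s)$ and $\|\Phi_t-\Phi_s\|_{L^1(\bR^d)}=|t^2-s^2|/2\le T|t-s|$; Young's inequality (for $d=2$ via \eqref{embed} and $|B_R|^{1/(2q)}\sim cR^{d-\beta/2}$; for $d=1$ by a direct computation with $\gamma(z)=|z|^{-\beta}$) yields precisely this bound, and Kolmogorov's criterion then produces the continuous modification together with the required tightness.

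The main obstacle is the ``off-diagonal gain'' underlying both part (i) and part (ii): the naive estimates (using \eqref{embed} alone) only give $O(R^{2d-\beta})$ for the $n\ge2$ chaos contributions and $O(\sigma_R^4)=O(R^{4d-2\beta})$ for the quantity $\mathcal{Q}_R$ in Proposition 1.8, whereas one needs an extra factor $R^{-(d-\beta)}$ (up to logarithms). Extracting it requires using that $f_n$ ($n\ge2$) is compactly supported in all but one spatial variable, so that after integrating out the ``local'' variables one is left with a genuinely off-diagonal Riesz integral $\int_{B_R^2}|x-y|^{-\beta}h(x-y)\,dx\,dy$ with $h(z)\sim c|z|^{-\beta}$ at infinity, whose size is $O(R^{d})(1+R^{d-2\beta}\vee\log R)$ rather than $O(R^{2d-\beta})$; keeping track of the dependence on $n$ (the constant $C^n$ of \eqref{embed} weighed against the $1/n!$ coming from the time-ordered simplices) so that the series over $n$ still converges is the extra bookkeeping that must be handled carefully.
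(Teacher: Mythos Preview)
Your overall architecture---first chaos dominates for part~(i), Proposition~1.8 of \cite{BNQZ} for part~(ii), and tightness plus finite-dimensional convergence for part~(iii)---matches the paper. But two aspects diverge significantly, and one of them is a genuine gap.

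\textbf{Part~(i), higher chaos.} Your spatial-domain argument (``after integrating out local variables one is left with $\int_{B_R^2}|x-y|^{-\beta}h(x-y)\,dxdy$ with $h(z)\sim c|z|^{-\beta}$'') is plausible for $n=2$ but is not substantiated for general $n$, where the symmetrization produces $n!$ cross terms of heterogeneous structure and the claimed rate $O(R^d)(1+R^{d-2\beta}\vee\log R)$ would require real work to justify uniformly. The paper avoids this entirely by working in Fourier variables: after the change $(x,x',\xi)\to(Rx,Rx',\xi/R)$ and an application of Lemma~\ref{Lem-inner product inequality}, the $n\ge2$ contribution is bounded by a fixed integral times $\int_{B_1^2}e^{-iR\eta_{n-1}\cdot(x-x')}|x-x'|^{-\beta}dxdx'$, which tends to zero by Riemann--Lebesgue. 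No quantitative rate is needed; dominated convergence handles the sum over $n$.

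\textbf{Part~(ii).} Your ``main obstacle'' paragraph overstates the difficulty here. There is no off-diagonal gain to extract: after the scaling \eqref{ch-var}, the support of $G$ forces all the $\bR^d$-variables into a fixed ball $B_3$ (for $R\ge t$), and then one simply integrates the six $G$-factors (each contributing $t/R$) and the three $\gamma$-factors (each contributing a finite constant since $|\cdot|^{-\beta}$ is locally integrable). This gives $\cA\le CR^{6+4d-3\beta}(t/R)^6=CR^{4d-3\beta}$ directly. The embedding~\eqref{embed} plays no role in this step; it was only used upstream to prove the pointwise bounds \eqref{D-bound} and \eqref{bound-D2}.

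\textbf{Part~(iii), tightness: this is a real gap.} You write that hypercontractivity ``together with the factorial decay from part~(i)'' reduces the increment bound to the first chaos. It does not. Hypercontractivity gives $\|F_R(t)-F_R(s)\|_p\le\sum_{n\ge1}(p-1)^{n/2}(n!)^{1/2}\|\widetilde g_{n,R}(\cdot;t,s)\|_{\cP_0^{\otimes n}}$, and you need a factor $(t-s)$ in \emph{every} term, not only for $n=1$. The factorial decay you invoke from part~(i) bounds $\|g_{n,R}(\cdot;t)\|$, not the increment $\|g_{n,R}(\cdot;t)-g_{n,R}(\cdot;s)\|$; a triangle inequality throws away the $(t-s)$. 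The paper obtains the missing factor via the Fourier side: since $\cF G_r(\xi)=\sin(r|\xi|)/|\xi|$ is $1$-Lipschitz in $r$ uniformly in $\xi$, one has $|\cF G_{t-t_n}(\xi)-\cF G_{s-t_n}(\xi)|\le t-s$, and combining this with Lemma~\ref{Lem-inner product inequality} yields $n!\|\widetilde g_{n,R}(\cdot;t,s)\|^2\le C^n(t-s)^2 R^{2d-\beta}/n!$ for every $n$, hence the Kolmogorov bound. Your Young-inequality argument handles $n=1$ nicely but leaves all $n\ge2$ unaccounted for.
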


The article is organized as follows. In Section \ref{section-exist}, we review some basic facts about Malliavin calculus, we prove the existence of solution under (D), and we compute the covariance of the solution. In Section \ref{sec:Malliavin derivative}, we give the key moment estimates for the first and second Malliavin derivatives of the solution. The proofs of Theorems \ref{CLT-integr} and \ref{CLT-Riesz}
are given in Sections \ref{section-CLT1} and \ref{section-CLT2}, respectively. The appendix contains some auxiliary results.

To simplify the writing, throughout the article we will use the convention:
 \begin{align}
 \text{$G_t(x) =0$ when $t\leq 0$.}
 \label{rule1}
 \end{align}

\section{Skorohod solution}
\label{section-exist}

In this section, we review some elements of Malliavin calculus and give some basic properties of the solution. In particular, we show that under condition (D), equation \eqref{ham2} has a unique (Skorohod) solution, a result which was stated in \cite{BCC} without proof (see Theorem 2.2 of \cite{BCC}). We refer the reader to \cite{nualart06,NN} for more details about Malliavin calculus.

\medskip

Since $W=\{W(\varphi);\varphi \in \cP_0\}$ is an isonormal Gaussian process,
every square-integrable random variable $F$
which is measurable with respect to $W$ has the Wiener chaos expansion:
\begin{equation}
\label{F-chaos}
F=E(F)+\sum_{n \geq 1}I_n(f_n) \quad \mbox{for some} \quad f_n \in \cP_0^{\otimes n},
\end{equation}
where $\cP_0^{\otimes n}$ is the
$n$-th tensor product of $\cP_0$ and $I_n$
is the multiple Wiener integral with respect to $W$.
By the orthogonality of the Wiener chaos spaces,
\[
 E[I_n(f)I_m(g)]=\left\{
\begin{array}{ll} n! \, \langle \widetilde{f}, \widetilde{g} \rangle_{\cP_{0}^{\otimes n}} & \mbox{if $n=m$} \\
0 & \mbox{if $n \not=m$}
\end{array} \right.
\]
where $\widetilde{f}$ is the symmetrization of $f$ in all $n$
variables:
$$\widetilde{f}(x_1,\ldots,x_n)=\frac{1}{n!}\sum_{\rho \in S_n}f(x_{\rho(1)},\ldots,x_{\rho(n)}),$$
and $S_n$ is the set of all permutations of $\{1,
\ldots,n\}$. It can be proved that:
\begin{equation}
\label{rough-bound}
\|\widetilde f\|_{\cP_0^{\otimes n}} \leq \|f\|_{\cP_0^{\otimes n}},
\end{equation}
an inequality will be used several times below.
If $F$ has the chaos expansion \eqref{F-chaos}, then
$$E|F|^2=\sum_{n \geq 0}E|I_n(f_n)|^2=\sum_{n \geq 0}n! \, \|\widetilde{f}_n\|_{\cH^{\otimes n}}^{2}.$$

Let $\cS$ be the class of ``smooth'' random variables, i.e variables of the form
\begin{equation}
\label{form-F}F=f(W(\varphi_1),\ldots, W(\varphi_n)),
\end{equation} where $f \in C_{b}^{\infty}(\bR^n)$, $\varphi_i \in \cP_0$, $n \geq 1$, and
$C_b^{\infty}(\bR^n)$ is the class of bounded $C^{\infty}$-functions
on $\bR^n$, whose partial derivatives of all orders are bounded. The
{\em Malliavin derivative} of $F$ of the form (\ref{form-F}) is the
$\cP_0$-valued random variable given by:
$$DF:=\sum_{i=1}^{n}\frac{\partial f}{\partial x_i}(W(\varphi_1),\ldots,
W(\varphi_n))\varphi_i.$$ We endow $\cS$ with the norm
$\|F\|_{\bD^{1,2}}:=(E|F|^2)^{1/2}+(E\|D F \|_{\cP_0}^{2})^{1/2}$. The
operator $D$ can be extended to the space $\bD^{1,2}$, the
completion of $\cS$ with respect to $\|\cdot \|_{\bD^{1,2}}$.

The {\em divergence operator} $\delta$ is the adjoint of
the operator $D$. The domain of $\delta$, denoted by $\mbox{Dom} \
\delta$, is the set of $u \in L^2(\Omega;\cP_0)$ such that
$$|E \langle DF,u \rangle_{\cH}| \leq c (E|F|^2)^{1/2}, \quad \forall F \in \bD^{1,2},$$
where $c$ is a constant depending on $u$. If $u \in {\rm Dom} \
\delta$, then $\delta(u)$ is the element of $L^2(\Omega)$
characterized by the following duality relation:
\begin{equation}
\label{duality}
E(F \delta(u))=E\langle DF,u \rangle_{\cP_0}, \quad
\forall F \in \bD^{1,2}.
 \end{equation}
In particular, $E(\delta(u))=0$. If $u \in \mbox{Dom} \ \delta$, we
use the notation
$$\delta(u)=\int_{\bR^d}u(x) W(\delta x),$$
and we say that $\delta(u)$ is the {\em Skorohod integral} of $u$
with respect to $W$.

If $F$ has the chaos expansion \eqref{F-chaos}, we define the {\em Ornstein-Uhlenbeck generator}
\[
LF=\sum_{n\geq 1}n I_n(f_n)
\]
provided that the series converges in $L^2(\Omega)$.
It can be proved that
$F \in {\rm Dom}\ L$ if and only if $F \in \bD^{1,2}$ and $DF \in {\rm Dom} \ \delta$; in this case, $LF=-\delta (D F)$.
The pseudo-inverse $L^{-1}$ of $L$ is defined by
\[
L^{-1}F=\sum_{n\geq 1}\frac{1}{n} I_n(f_n).
\]
For any $F \in \bD^{1,2}$ with $\bE(F)=0$, the process $u=-D L^{-1}F$ belongs to ${\rm Dom} \ \delta$
and
\begin{equation}
\label{F-deltaD}
F=\delta(-D L^{-1} F).
\end{equation}
(see e.g. Proposition 6.5.1 of \cite{NN}).

\medskip

We return now to our problem. The solution to equation \eqref{ham2} exists if and only if the series $\sum_{n\geq 1}I_n(f_n(\cdot,x;t)) $ converges in $L^2(\Omega)$, where the kernel $f_n(\cdot,x;t)$ is given by:
\[
f_n(x_1,\ldots,x_n,x;t)=\int_{T_n(t)} G_{t-t_n}(x-x_n)\ldots G_{t_2-t_1}(x_2-x_1)dt_1 \ldots dt_n,
\]
with $T_n(t)=\{(t_1,\ldots,t_n);0<t_1<\ldots<t_n<t\}$.
In this case,
\begin{align}
\label{eq-chaos expansion}
	u(t,x)=1+\sum_{n\geq 1}I_n(f_n(\cdot,x;t)),
\end{align}
and
\[
\bE|u(t,x)|^2=\sum_{n\geq 1}n!\|\widetilde{f}_n(\cdot,x;t)\|_{\cP_0^{\otimes n}}^2.
\]

We have following  result.
\begin{theorem}
\label{exist-th}
Assume that $\gamma$ is non-negative and non-negative definite and the spectral measure $\mu$ satisfies (D), or the noise is white and $d=1$. Then for any $t>0$ and $x \in \bR^d$,
\begin{equation}
\label{e-fn}
\|f_n(\cdot,x;t)\|_{\cP_0^{\otimes n}}^2 \leq \left(\frac{t^n}{n!} \right)^2 D_t^n C_{\mu}^n,
\end{equation}
where $D_t=2(t^2 \vee 1)$ and $C_{\mu}$ is given by \eqref{D-cond}, or $C_{\mu}=1/2$ if the noise is white. Consequently,
equation \eqref{ham2} has a unique (Skorohod) solution, and for any $p\geq 2$ and $T>0$,
\begin{equation}
\label{e-u}
\sup_{(t,x) \in [0,T] \times \bR^d}\|u(t,x)\|_p<\infty.
\end{equation}
\end{theorem}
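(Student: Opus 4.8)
The plan is to establish the kernel estimate \eqref{e-fn} first, and then deduce existence, uniqueness, and the uniform moment bound \eqref{e-u} as routine consequences. For the kernel bound, the key observation is that $f_n(\cdot,x;t)$ is an iterated time-integral of products of shifted fundamental solutions $G$. The natural strategy is to pass to the Fourier side in the spatial variables: since $\cF G_t(\xi) = \frac{\sin(t|\xi|)}{|\xi|}$ in both $d=1$ and $d=2$, one has the pointwise bound $|\cF G_t(\xi)|^2 \le \frac{t^2}{1+t^2|\xi|^2} \cdot (1+t^2) \le (1+t^2)\,\frac{1+|\xi|^2}{1+|\xi|^2}\cdot\frac{1}{1+|\xi|^2}$ — more carefully, $|\cF G_t(\xi)|^2 = \frac{\sin^2(t|\xi|)}{|\xi|^2} \le \frac{t^2}{1+t^2|\xi|^2} \le (t^2\vee 1)\,\frac{2}{1+|\xi|^2}$, which is where the constant $D_t = 2(t^2\vee 1)$ comes from. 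Writing $\|f_n(\cdot,x;t)\|_{\cP_0^{\otimes n}}^2$ as an integral over $T_n(t)^2 \times (\bR^d)^n$ of $\prod_j \gamma(\cdot)$ against the $G$-factors and applying the Fourier representation of $\langle\cdot,\cdot\rangle_{\cP_0^{\otimes n}}$, one reduces to an integral over $(t_1,\dots,t_n)\in T_n(t)$, $(s_1,\dots,s_n)\in T_n(t)$, and $(\xi_1,\dots,\xi_n)\in(\bR^d)^n$ of $\prod_{j} \cF G_{t_{j+1}-t_j}(\xi_1+\dots+\xi_j)\,\overline{\cF G_{s_{j+1}-s_j}(\xi_1+\dots+\xi_j)}\,\mu(d\xi_j)$, with the convention $t_{n+1}=s_{n+1}=t$. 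Bounding each pair of $G$-factors by the Cauchy–Schwarz-type inequality $|\cF G_{a}(\eta)\,\cF G_b(\eta)| \le \frac12(|\cF G_a(\eta)|^2 + |\cF G_b(\eta)|^2)$ and then using the bound above, the spectral integrals decouple: each $\xi_j$-integral contributes a factor $C_\mu$, while the time integrals over $T_n(t)^2$ contribute $(t^n/n!)^2$. This gives exactly \eqref{e-fn}. For the white-noise case with $d=1$, one works directly with $\gamma=\delta_0$, so $\|f_n\|^2_{\cP_0^{\otimes n}}$ is the $L^2((\bR)^n)$-norm of the symmetrized kernel, and the same scheme runs with $C_\mu=1/2$.

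Once \eqref{e-fn} is in hand, existence and uniqueness of the Skorohod solution follow from the standard $L^2(\Omega)$ criterion: by \eqref{rough-bound} and \eqref{e-fn},
\[
\sum_{n\ge 1} n!\,\|\widetilde f_n(\cdot,x;t)\|_{\cP_0^{\otimes n}}^2 \le \sum_{n\ge 1} n!\,\|f_n(\cdot,x;t)\|_{\cP_0^{\otimes n}}^2 \le \sum_{n\ge 1} \frac{(D_t C_\mu)^n\, t^{2n}}{n!} = e^{D_t C_\mu t^2}<\infty,
\]
so the series $\sum_n I_n(f_n(\cdot,x;t))$ converges in $L^2(\Omega)$ and \eqref{eq-chaos expansion} defines the solution; uniqueness is immediate since any Skorohod solution must have this chaos expansion. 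For the uniform $L^p$-bound \eqref{e-u}, I would invoke hypercontractivity on Wiener chaos: for a fixed-chaos element $I_n(g)$ one has $\|I_n(g)\|_p \le (p-1)^{n/2}\|I_n(g)\|_2$, hence by Minkowski's inequality in $L^p(\Omega)$,
\[
\|u(t,x)\|_p \le 1 + \sum_{n\ge 1} (p-1)^{n/2} \bigl(n!\,\|\widetilde f_n(\cdot,x;t)\|_{\cP_0^{\otimes n}}^2\bigr)^{1/2} \le 1 + \sum_{n\ge 1} \frac{(p-1)^{n/2}(D_t C_\mu)^{n/2} t^{n}}{\sqrt{n!}},
\]
which converges and is bounded uniformly over $(t,x)\in[0,T]\times\bR^d$ (the bound is increasing in $t$, so one evaluates at $t=T$), giving \eqref{e-u}. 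The $x$-uniformity is automatic because the bound does not depend on $x$.

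The main obstacle is the Fourier-analytic estimate for $\|f_n\|_{\cP_0^{\otimes n}}^2$: one must be careful that the change of variables $\eta_j = \xi_1 + \cdots + \xi_j$ does not genuinely decouple the spectral variables — after the substitution the factor $\cF G_{t_{j+1}-t_j}(\eta_j)$ depends only on $\eta_j$, but the measure $\prod_j \mu(d\xi_j)$ does not split as $\prod_j \mu(d\eta_j)$ in general. The clean way around this is to keep the variables as $\xi_1,\dots,\xi_n$ and note that $G_{t_{j+1}-t_j}$ appears with spatial argument a difference $x_{j+1}-x_j$; writing everything on the Fourier side variable-by-variable, the $j$-th $G$-pair is evaluated at the partial sum, and one bounds $|\cF G_a(\cdot)|^2 \le D_t/(1+|\cdot|^2)$ uniformly for $a\in(0,t)$, after which each application of Dalang's condition (D) peels off one factor $C_\mu$ regardless of which linear combination of the $\xi$'s appears — this is the standard argument and the substitution is used only to identify the argument, not to factor the measure. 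A secondary point to handle with care is the symmetrization: one uses \eqref{rough-bound} to pass from $\widetilde f_n$ to $f_n$, so the estimate \eqref{e-fn} for the unsymmetrized kernel suffices throughout.
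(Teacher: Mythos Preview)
Your proposal is correct and follows essentially the same route as the paper: pass to the Fourier side, use Cauchy--Schwarz over the time simplex to reduce $\|f_n(\cdot,x;t)\|_{\cP_0^{\otimes n}}^2$ to $\tfrac{t^n}{n!}J_n(t)$, then bound $|\cF G_a(\eta)|^2\le D_t(1+|\eta|^2)^{-1}$ and peel off the spectral variables one by one; existence, uniqueness and the $L^p$ bound then follow exactly as you describe via \eqref{rough-bound} and hypercontractivity. Two small remarks: (i) your intermediate inequality $\frac{\sin^2(t|\xi|)}{|\xi|^2}\le \frac{t^2}{1+t^2|\xi|^2}$ is actually false (take $t|\xi|=\pi/2$), but it is unnecessary---one gets $|\cF G_t(\xi)|^2\le D_t(1+|\xi|^2)^{-1}$ directly from $\sin^2(t|\xi|)\le \min(1,t^2|\xi|^2)$; (ii) the ``peeling'' step you invoke is precisely the shift-maximum principle $\sup_{\eta}\int_{\bR^d}(1+|\xi+\eta|^2)^{-1}\mu(d\xi)=C_\mu$, which the paper states explicitly and which is what makes the iteration work despite the non-factoring measure.
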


\begin{proof}
 By the Cauchy-Schwarz inequality,
\begin{align*}
 |\cF f_n(\cdot,x;t)(\xi_1,\ldots,\xi_n) |^2&=\left|\int_{T_n(t)} \prod_{j=1}^{n}\cF G_{t_{j+1}-t_j}(\xi_1+\ldots+\xi_j) dt_1 \ldots dt_n\right|^2 \\
 & \leq \frac{t^n}{n!}\int_{T_n(t)} \prod_{j=1}^{n}|\cF G_{t_{j+1}-t_j}(\xi_1+\ldots+\xi_j)|^2 dt_1 \ldots dt_n,
\end{align*}
where $t_{n+1}=t$. Hence,
\begin{equation}
\label{bound-fn}
\|f_n(\cdot,x;t)\|_{\cP_0^{\otimes n}}^2 =
\int_{(\bR^d)^n}|\cF f_n(\cdot,x;t)(\xi_1,\ldots,\xi_n) |^2 \mu(d \xi_1) \ldots \mu(d\xi_n)\leq \frac{t^n}{n!} J_n(t),
\end{equation}
where
\begin{equation}
\label{def-Jn}
J_n(t)=\int_{T_n(t)} \int_{(\bR^d)^n} \prod_{j=1}^{n}|\cF G_{t_{j+1}-t_j}(\xi_1+\ldots+\xi_j)|^2 \mu(d \xi_1) \ldots \mu(d\xi_n)dt_1 \ldots dt_n.
\end{equation}

Using the fact that $|\cF G_t(\xi)|^2 =\frac{\sin^2(t|\xi|)}{|\xi|^2}\leq D_t \frac{1}{1+|\xi|^2}$ with $D_t=2(t^2 \vee 1)$, and
\begin{equation}
\label{max-principle}
\sup_{\eta \in \bR^d}\int_{\bR^d}\frac{1}{1+|\xi+\eta|^2}\mu(d\xi)=\int_{\bR^d}
\frac{1}{1+|\xi|^2}\mu(d\xi)=C_{\mu},
\end{equation}
we obtain that
\begin{equation}
\label{Jn-bound}
J_n(t) \leq D_t^n C_{\mu}^n \frac{t^n}{n!}.
\end{equation}

Relation \eqref{e-fn} follows.
Using the rough bound \eqref{rough-bound}, we get:
\[
\sum_{n\geq 1}n! \|\widetilde{f}_n(\cdot,x;t)\|_{\cP_0^{\otimes n}}^2
\leq \sum_{n\geq 1}n! \|f_n(\cdot,x;t)\|_{\cP_0^{\otimes n}}^2 \leq \sum_{n\geq 1}
\frac{t^{2n}}{n!} D_t^n C_{\mu}^n<\infty.
\]
This proves the existence of solution.
To estimate its moments, we use hypercontractivity:
\[
\|u(t,x)\|_p \leq \sum_{n\geq 0}(p-1)^{n/2} (n!)^{1/2}\|\widetilde{f}_n(\cdot,x;t)\|_{\cP_0^{\otimes n}}
\leq \sum_{n\geq 0}(p-1)^{n/2}  \frac{t^{n}}{(n!)^{1/2}}(D_t C_{\mu})^{n/2}.
\]
Relation \eqref{e-u} follows, since the constant $D_t$ is increasing in $t$.
\end{proof}

\begin{remark}
{\rm Theorem \ref{exist-th} remains valid in any dimension $d\geq 1$.
}
\end{remark}

\begin{remark}[white noise]
{\rm By Theorem 3.1 of \cite{BCC}, we know that equation \eqref{ham2} driven by white noise has a unique (Skorohod) solution which satisfies \eqref{e-u} also in dimension $d=2$, although
condition (D) does not hold in this case. Unfortunately, in the case of the white noise in dimension $d=2$,
we could not prove the key estimate \eqref{D-bound} below for the Malliavin derivative $D_zu(t,x)$, the main difficulty being that $G_t^2$ is not integrable.
}
\end{remark}

\begin{remark}[Comparison with white noise in time]
{\rm Consider the hyperbolic Anderson model with Gaussian noise $\fX$ which is white noise in time and has the same spatial covariance structure as $W$:
\begin{align}
\label{ham1}
	\begin{cases}
		\dfrac{\partial^2 v}{\partial t^2} (t,x)
		= \Delta v(t,x) + v(t,x) \dot{\fX}(t,x), \
		t>0, \ x \in \bR^d, (d\leq 2)\\
		v(0,x) = 1, \ \dfrac{\partial v}{\partial t} (0,x) = 0
	\end{cases}
\end{align}
More precisely, $\fX=\{\fX(\varphi);\varphi \in \cD(\bR_{+} \times \bR^d)\}$ is a zero-mean Gaussian process with covariance
\[
\bE[\fX(\varphi)\fX(\psi)]=\int_{\bR_{+}}
\int_{(\bR^d)^2}\gamma(x-y)\varphi(t,x)\psi(t,y)dxdydt=:\langle \varphi,\psi \rangle_{\cH_0},
\]
We let $\cH_0$ be the completion of $\cD(\bR_{+} \times \bR^d)$ with respect to the inner product $\langle \varphi,\psi \rangle_0$. Then $\cH_0$ is isomorphic to $L^2(\bR_{+};\cP_0)$.
If (D) holds, equation \eqref{ham1} has a unique solution which has the chaos expansion:
\[
v(t,x)=1+\sum_{n\geq 1}I_n^{\fX}(f_n(\cdot,t,x))
\]
where $I_n^{\fX}$ is the multiple integral with respect to $\fX$ and the kernel $f_n(\cdot,t,x)$ is given by
\[
f_n(t_1,x_1,\ldots,t_n,x_n,t,x)=G_{t-t_n}(x-x_n)\ldots G_{t_2-t_1}(x_2-x_1)1_{\{0<t_1<\ldots<t_n<t\}}.
\]

It is not difficult to see that  $\bE|v(t,x)|^2=\sum_{n\geq 0} J_n(t)$, where
$J_n(t)$ is given by \eqref{def-Jn} for $n\geq 1$, and $J_0(t)=1$.
Using \eqref{rough-bound} and \eqref{bound-fn}, we obtain that for any $t \in [0,1]$ and $x \in \bR^d$,
\[
\bE|u(t,x)|^2=\sum_{n\geq 0} n!\|\widetilde{f}_n(\cdot,x;t)\|_{\cP_0^{\otimes n}}^2 \leq \sum_{n\geq 0} n!  \|f_n(\cdot,x;t)\|_{\cP_0^{\otimes n}}^2 \leq \sum_{n\geq 0}  t^n J_n(t)
 \leq \bE|v(t,x)|^2.
\]
}
\end{remark}

\begin{remark}[Covariance of solution]
\label{rem-cov}
{\rm
For $t>0$, $s>0$, $x \in \bR^d$ and $y \in \bR^d$, using the Wiener chaos decomposition \eqref{eq-chaos expansion}, we see that
\begin{equation}
\label{def-rho}
\rho_{t,s}(x-y)=\bE \left[ \big(u(t,x)-1\big) \big(u(s,y)-1\big) \right]
	= \sum_{n\geq 1}\frac{1}{n!}\alpha_n(x-y;t,s)
\end{equation}
where
\begin{align}
\label{eq-inner product}
\nonumber
\alpha_n(x-y;t,s)=	& (n!)^2 \langle \widetilde f_n(\cdot,x;t), \widetilde f_n(\cdot,y;s) \rangle_{\cP_0^{\otimes n}}=(n!)^2 \langle f_n(\cdot,x;t), \widetilde f_n(\cdot,y;s) \rangle_{\cP_0^{\otimes n}}  \\
\nonumber
	=& n! \sum_{\rho \in S_n} \int_{(\bR^d)^n} \int_{(\bR^d)^n} f_n(x_1, \ldots, x_n,x;t) f_n(y_{\rho(1)}, \ldots, y_{\rho(n)},y;s) \prod_{i=1}^{n}\gamma(x_i-y_i) d\pmb{x} d\pmb{y}\\
\nonumber
	=& n!\sum_{\rho \in S_n} \int_{(\bR^d)^n}
	\cF f_n(\cdot,x;t)(\xi_1,\ldots,\xi_n) \overline{\cF f_n(\cdot,y;s)(\xi_{\rho(1)},\ldots,\xi_{\rho(n)})}
	\mu(d\xi_1) \ldots \mu(d\xi_n) \\
\nonumber
	=& n!\sum_{\rho \in S_n} \int_{(\bR^d)^n}
	e^{-i(\xi_1+\ldots+\xi_n)\cdot (x-y)} \left(\int_{T_n(t)} \prod_{j=1}^n \cF G_{t_{j+1}-t_j}(\xi_1+\ldots+\xi_j) d \pmb{t}\right)\\
	&\left(\int_{T_n(s)} \prod_{j=1}^n \cF G_{s_{j+1}-s_j}(\xi_{\rho(1)}+\ldots+\xi_{\rho(j)}) d \pmb{s} \right)\mu(d\xi_1) \ldots \mu(d\xi_n),
\end{align}
and we denote $t_{n+1}=t$, $s_{n+1}=s$, $\pmb{t}=(t_1,\ldots,t_n)$ and $\pmb{s}=(s_1,\ldots,s_n)$, $\pmb{x}=(x_1,\ldots,x_n)$ and $\pmb{y}=(y_1,\ldots,y_n)$.
In particular, this shows that
$\alpha_n(x-y;t,s)$ and $\rho_{t,s}(x-y)$ are non-negative and depend on $x$ and $y$ only through the difference $x-y$.
}
\end{remark}

\section{Estimate for Malliavin derivative}
\label{sec:Malliavin derivative}


In this section, we will prove some key estimates for the moments of the first and second Malliavin derivatives of the solution to equation \eqref{ham2}.

We will show that for any $z \in \bR^d$ fixed,
\begin{equation}
\label{D-series2}
D_{z}u(t,x)=\sum_{n\geq 1}n I_{n-1}(\widetilde{f}_n(\cdot,z,x;t)):=\sum_{n\geq 1}A_n(z,x;t) \quad \mbox{in $L^2(\Omega)$}.
\end{equation}
More importantly, we will give an estimate for the $p$-th moment of $D_{z}u(t,x)$ showing that the first term of the series above:
\[
A_1(z,x;t)=f_1(z,x;t)=\int_{0}^{t}G_{t-r}(x-z)dr
\]
dominates the other terms.

\medskip

First, note that for any $z \in \bR^d$ fixed, we have the decomposition:
\begin{equation}
\label{decomp}
\widetilde{f}_n(\cdot,z,x;t)=\frac{1}{n}\sum_{j=1}^{n}h_j^{(n)}(\cdot,z,x;t),
\end{equation}
where $h_j^{(n)}(\cdot,z,x;t)$ is the symmetrization of the function $f_j^{(n)}(\cdot,z,x;t)$ given by:
\begin{align*}
&f_j^{(n)}(x_1,\ldots,x_{n-1},z,x;t)=
f_n(x_1,\ldots,x_{j-1},z,x_{j},\ldots,x_{n-1},x;t)\\
&\quad =\int_{\{0<t_1<\ldots<t_{j-1}<r<t_j<\ldots<t_{n-1}<t\}}
G_{t-t_{n-1}}(x-x_{n-1})\ldots G_{t_j-r}(x_j-z)G_{r-t_{j-1}}(z-x_{j-1})\ldots \\
& \qquad \qquad \qquad \qquad \qquad G_{t_2-t_1}(x_2-x_1)dt_1 \ldots dt_{n-1}dr.
\end{align*}

The next result will play an important role in our developments.

\begin{theorem}
\label{main-th1}
Assume that $\gamma$ is non-negative and non-negative definite and the spectral measure $\mu$ satisfies (D), or the noise is white and $d=1$.
If $d=2$, suppose that Hypothesis {\bf (H1)} holds.
Then for any $t>0$, $x\in \bR^d$, $z \in \bR^d$ and $p\geq 2$,
\begin{equation}
\label{D-bound}
\|D_{z}u(t,x)\|_p \leq C \int_0^t G_{t-r}(x-z)dr,
\end{equation}
where the constant $C$ depends on $(p,t,\gamma)$ and is increasing in $t$.
\end{theorem}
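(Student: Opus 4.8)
The plan is to pass to the chaos expansion \eqref{D-series2} and reduce the statement to a single estimate on the kernels $f_j^{(n)}$. Since $A_n(z,x;t)=nI_{n-1}(\widetilde f_n(\cdot,z,x;t))$ lives in a fixed Wiener chaos, hypercontractivity gives $\|A_n(z,x;t)\|_p\le(p-1)^{(n-1)/2}n\sqrt{(n-1)!}\,\|\widetilde f_n(\cdot,z,x;t)\|_{\cP_0^{\otimes(n-1)}}$, so that
\[
\|D_zu(t,x)\|_p\le\sum_{n\ge1}(p-1)^{(n-1)/2}n\sqrt{(n-1)!}\,\|\widetilde f_n(\cdot,z,x;t)\|_{\cP_0^{\otimes(n-1)}}.
\]
Then \eqref{decomp} together with the contraction bound \eqref{rough-bound} reduces matters to proving that, for some constant $C=C(t,\gamma)$ increasing in $t$ and some fixed exponent $c$ independent of $n$,
\[
\|f_j^{(n)}(\cdot,z,x;t)\|_{\cP_0^{\otimes(n-1)}}\le\frac{C^n\,t^{cn}}{(j-1)!\,(n-j)!}\int_0^tG_{t-r}(x-z)\,dr\qquad(1\le j\le n);
\]
indeed, summing over $j$ via $\sum_{j=1}^n\frac1{(j-1)!(n-j)!}=\frac{2^{n-1}}{(n-1)!}$ and then over $n$ (the factor $1/\sqrt{(n-1)!}$ beats everything) produces a finite, $t$-increasing constant and the bound \eqref{D-bound}, since $\int_0^tG_{t-r}(x-z)\,dr=f_1(z,x;t)$.

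For the kernel estimate I would split the chain $f_j^{(n)}(x_1,\dots,x_{n-1},z,x;t)$ at the node $z$. Writing $r$ for the integration time at which the chain passes through $z$, the part of the chain below time $r$ is exactly $f_{j-1}(x_1,\dots,x_{j-1},z;r)$, while (after the shift $t_i\mapsto t_i-r$) the part above time $r$ is the ``$z$-to-$x$'' kernel
\[
g_m(y_1,\dots,y_m;z,x;\tau):=\int_{0<s_1<\dots<s_m<\tau}G_{s_1}(y_1-z)G_{s_2-s_1}(y_2-y_1)\cdots G_{\tau-s_m}(x-y_m)\,ds_1\cdots ds_m,
\]
so that $f_j^{(n)}(x_1,\dots,x_{n-1},z,x;t)=\int_0^tf_{j-1}(x_1,\dots,x_{j-1},z;r)\,g_{n-j}(x_j,\dots,x_{n-1};z,x;t-r)\,dr$. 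For each $r$ the integrand is a tensor product (the first factor in the variables $x_1,\dots,x_{j-1}$, the second in $x_j,\dots,x_{n-1}$), so Minkowski's integral inequality and the multiplicativity of the tensor-product Hilbert norm give
\[
\|f_j^{(n)}(\cdot,z,x;t)\|_{\cP_0^{\otimes(n-1)}}\le\int_0^t\|f_{j-1}(\cdot,z;r)\|_{\cP_0^{\otimes(j-1)}}\,\|g_{n-j}(\cdot;z,x;t-r)\|_{\cP_0^{\otimes(n-j)}}\,dr,
\]
and the first factor is controlled by Theorem \ref{exist-th}: $\|f_{j-1}(\cdot,z;r)\|_{\cP_0^{\otimes(j-1)}}\le\frac{r^{j-1}}{(j-1)!}(D_rC_\mu)^{(j-1)/2}\le\frac{t^{j-1}}{(j-1)!}(D_tC_\mu)^{(j-1)/2}$. (When $j=n$, i.e. $m=0$, the factor $g_0(\cdot;z,x;t-r)=G_{t-r}(x-z)$ is a scalar and the required $\int_0^tG_{t-r}(x-z)\,dr$ appears directly.)

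Everything thus comes down to a sharp bound on $\|g_m(\cdot;z,x;\tau)\|_{\cP_0^{\otimes m}}$, $m\ge1$, exhibiting the factor $\int_0^\tau G_{\tau-\rho}(x-z)\,d\rho$ with the correct vanishing rate as $|x-z|\uparrow\tau$. For $d=1$ this is an explicit computation: with $G_\rho=\frac12\mathbf{1}_{\{|\cdot|<\rho\}}$ the $m$-fold ordered-time volume defining $g_m$ is a simplex, giving $g_m(y_1,\dots,y_m;z,x;\tau)=\frac1{2^{m+1}m!}(\tau-\ell(\mathbf{y}))_+^m$ with $\ell(\mathbf{y})=|y_1-z|+\sum_{i=2}^m|y_i-y_{i-1}|+|x-y_m|\ge|x-z|$; using $(\tau-\ell)_+^m\le(\tau-|x-z|)_+\,\tau^{m-1}\mathbf{1}_{\{\ell<\tau\}}$, the finite-propagation support bound $\int_{\bR^m}\mathbf{1}_{\{\ell(\mathbf{y})<\tau\}}\,d\mathbf{y}\le(2\tau)^m$, and the embedding $\|h\|_{\cP_0^{\otimes m}}\le C^{m/2}\|h\|_{L^{2q}(\bR^m)}$ (with $q=1$ when $\gamma\in L^1$ or the noise is white, and some $q\in(1/2,1)$ for the $d=1$ Riesz kernel), one obtains $\|g_m(\cdot;z,x;\tau)\|_{\cP_0^{\otimes m}}\le\frac{C^m\tau^{a_m}}{m!}(\tau-|x-z|)_+=\frac{2C^m\tau^{a_m}}{m!}\int_0^\tau G_{\tau-\rho}(x-z)\,d\rho$ with $a_m=O(m)$. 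For $d=2$ the same bound should follow from the embedding \eqref{embed} (whose admissibility with some $q\in(1/2,1)$ is exactly what Hypothesis {\bf (H1)} guarantees) combined with the space--time convolution estimates for the wave kernel in the plane ($\|G_\rho\|_{L^{2q}(\bR^2)}<\infty$ since $2q<2$), i.e. estimates of the type proved in Lemma 2.3 of \cite{BNQZ}, the finite-propagation support $\{\ell(\mathbf{y})<\tau\}$ again supplying the decay in $|x-z|$. Feeding this back, with $f_1(z,x;t-r)\le f_1(z,x;t)$ and $\int_0^t(t-r)^{a_{n-j}}r^{j-1}\,dr\le t^{a_{n-j}+j}$, yields the displayed kernel estimate (the powers $t^{cn}$ being harmless since $C$ may depend on and increase in $t$), which closes the argument.

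The hard part is this last sharp estimate on $g_m$: a crude Cauchy--Schwarz in the time variables destroys the decay of $g_m$ near the light cone $|x-z|=\tau$, and without it one cannot recover the factor $f_1(z,x;t)$, which vanishes there. In $d=1$ the decay is restored by the explicit simplex formula, and in $d=2$ it requires the $L^{2q}$-embedding and the wave-kernel convolution bounds, which is precisely where the restriction $d\le2$ and Hypothesis {\bf (H1)} are used. A minor but necessary point is to keep track of the powers of $t$ (using $r\le t$, $D_r\le D_t$, $f_1(z,x;\tau)\le f_1(z,x;t)$) so that the final constant is genuinely increasing in $t$, as asserted.
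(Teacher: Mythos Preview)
Your overall strategy---pass to the chaos expansion, use hypercontractivity and the decomposition \eqref{decomp}, split $f_j^{(n)}$ at the node $z$ into a ``below'' part $f_{j-1}(\cdot,z;r)$ controlled by Theorem~\ref{exist-th} and an ``above'' part $g_{n-j}$ whose norm must carry the factor $G_{t-r}(x-z)$---is exactly the paper's. The organization differs only cosmetically: you integrate the time variables into $f_{j-1}$ and $g_m$ before applying Minkowski in $r$, whereas the paper keeps all time integrals outside and uses Cauchy--Schwarz in time afterwards.

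The genuine gap is in your bound on $\|g_m(\cdot;z,x;\tau)\|_{\cP_0^{\otimes m}}$ for $d=1$. After correctly computing $g_m=\frac{1}{2^{m+1}m!}(\tau-\ell(\mathbf y))_+^m$ and bounding it by a constant times $(\tau-|x-z|)_+\,\mathbf 1_{\{\ell<\tau\}}$, you invoke an embedding $\|h\|_{\cP_0^{\otimes m}}\le C^{m/2}\|h\|_{L^{2q}}$. But Theorem~\ref{main-th1} is stated for \emph{any} non-negative, non-negative definite $\gamma$ whose spectral measure satisfies (D); for such general $\gamma$ (e.g.\ one with a non-integrable tail that is not a Riesz kernel) no $L^{2q}\hookrightarrow\cP_0$ embedding exists, and your argument breaks down. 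The paper avoids this by working pointwise \emph{before} time integration: since the chain of indicators forces $|x-z|<t-r$, one has
\[
g_{n-j}(t_j,x_j,\dots,t_{n-1},x_{n-1},r,z,t,x)\ \le\ G_{t-r}(x-z)\,f_{n-j}(t_j,x_j,\dots,t_{n-1},x_{n-1},t,x),
\]
with $G_{t-r}(x-z)$ now a scalar. Because $\gamma\ge0$, pointwise domination transfers to $\cP_0^{\otimes(n-j)}$-norms, and $\|f_{n-j}\|$ is controlled exactly as in Theorem~\ref{exist-th}. This uses nothing about $\gamma$ beyond non-negativity, which is precisely why $d=1$ needs no analogue of {\bf (H1)}. (In fact your own simplex formula yields the same pointwise bound $g_m\le G_\tau(x-z)\,f_m(\cdot,x;\tau)$ directly, so you could have bypassed the embedding altogether.)

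For $d=2$ your sketch points at the right tools, but ``the finite-propagation support again supplying the decay in $|x-z|$'' is the entire difficulty and is not automatic: $G_\tau$ is unbounded, and one needs the iterated-convolution estimates on $\int\prod G_{t_{k+1}-t_k}^{2q}\,d\mathbf x$ (relations (3.18), (3.20), (3.22), (3.23) of \cite{BNQZ}), which produce a factor $G_{t-r}^{2-1/q}(x-z)$ or $\mathbf 1_{\{|x-z|<t-r\}}$ and then use \eqref{G-monotone}--\eqref{I-less-G} to convert it to $G_{t-r}(x-z)$. The paper carries this out; your proposal only names it.
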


\begin{proof}
We first consider the case $p=2$.
We will prove that the series \eqref{D-series2} converges in $L^2(\Omega)$, and therefore,
\begin{equation}
\label{D-series3}
\bE|D_{z}u(t,x)|^2=\sum_{n\geq 1}\bE|A_n(z,x;t)|^2.
\end{equation}
We need to evaluate $\bE|A_n(z,x;t)|^2$. First, note that
\[
A_n(z,x;t)=nI_{n-1}(\widetilde{f}_n(\cdot,z,x;t))=
\sum_{j=1}^{n}I_{n-1}(h_j^{(n)}(\cdot,z,x;t)).
\]

The calculation below will show, in particular, that $f_j^{(n)}(\cdot,z,x;t) \in \cP_0^{\otimes (n-1)}$.

Using the estimate $|\sum_{j=1}^n a_j|^2 \leq n \sum_{j=1}^n |a_j|^2$ and the fact that $\bE|I_n(f)|^2=n! \|\widetilde{f}\|_{\cP_0^{\otimes n}}^2 \leq n! \|f\|_{\cP_0^{\otimes n}}^2 $ for any $f\in \cP_0^{\otimes n}$, we obtain that:
\begin{equation}
\label{e-An}
\bE|A_n(z,x;t)|^2\leq n\sum_{j=1}^{n}\bE|I_{n-1}(h_j^{(n)}(\cdot,z,x;t))|^2 \leq
n\sum_{j=1}^{n}(n-1)!\|f_j^{(n)}(\cdot,z,x;t)\|_{\cP_0^{\otimes (n-1)}}^2.
\end{equation}

The last term of this sum turns out to be easy to estimate. By definition,
\[
f_n^{(n)} (x_1,\ldots,x_{n-1},z,x;t)=\int_0^t G_{t-r}(x-z) f_{n-1}(x_1,\ldots,x_{n-1},z;r)dr.
\]
By the Minkowski's inequality for integrals,
\[
\|f_n^{(n)} (\cdot,z,x;t)\|_{\cP_0^{\otimes (n-1)}} \leq \int_0^t G_{t-r}(x-z) \|f_{n-1}(\cdot,z;r)\|_{\cP_0^{\otimes (n-1)}}dr.
\]
Using relation \eqref{e-fn} and the fact that the constant $D_t$ is increasing in $t$, we infer that
\begin{equation}
\label{j-equal-n}
\|f_n^{(n)}(\cdot,z,x;t)\|_{\cP_0^{\otimes (n-1)}} \leq \frac{t^{n-1}}{(n-1)!} (D_tC_{\mu})^{(n-1)/2} \int_0^t G_{t-r}(x-z)dr.
\end{equation}
Relation \eqref{j-equal-n} can also be deduced from relations \eqref{eq-estimate d=1} and \eqref{norm-fjn} below, with the convention that the second term on the right-hand side is equal to $1$ when $j=n$.

We will prove below that for any $j = 1, \ldots, n-1$,
\begin{align}
\label{eq-1}
	\|f_j^{(n)}(\cdot,z,x;t)\|_{\cP_0^{\otimes (n-1)}}^2 \leq C^n \frac{1}{[(j-1)!]^{2} [(n-j+1)!]^{2}} \left(\int_0^t G_{t-r}(x-z)dr\right)^2,
\end{align}
where $C$ is a constant that depends on $(q,\gamma,t)$, which is increasing in $t$ and may be different from line to line. This inequality holds also for $j=n$, due to \eqref{j-equal-n}. In particular, this inequality implies that $f_j^{(n)}(\cdot,z,x;t) \in \cP_0^{\otimes (n-1)}$ for any $j=1,\ldots,n$. Therefore,
\begin{align*}
	& \sum_{j=1}^{n} \|f_j^{(n)}(\cdot,z,x;t)\|_{\cP_0^{\otimes (n-1)}}^2  \leq
	\frac{C^n}{(n!)^2} \left(\int_0^t G_{t-r}(x-z)dr\right)^2 \sum_{j=1}^{n} \frac{(n!)^2}{[(j-1)!]^{2} [(n-j+1)!]^{2} } \\
	& \quad \quad \quad \leq  \frac{C^n}{(n!)^2} \left(\int_0^t G_{t-r}(x-z)dr\right)^2 \binom{2n}{n} \leq \frac{C^n}{(n!)^2} \left(\int_0^t G_{t-r}(x-z)dr\right)^2,
\end{align*}
where for the second last inequality we used the identity $\sum_{k=0}^n \binom{n}{k}^2=\binom{2n}{n}$, and for the last inequality we used the fact that $(2n)! \leq C^n(n!)^2$, due to Stirling's formula.

Finally, returning to \eqref{e-An}, we have:
\[
\bE|A_n(z,x;t)|^2 \leq n! \sum_{j=1}^{n} \|f_j^{(n)}(\cdot,z,x;t)\|_{\cP_0^{\otimes (n-1)}}^2 \leq \frac{C^n}{n!} \left(\int_0^t G_{t-r}(x-z)dr\right)^2.
\]

This shows that the series \eqref{D-series2} converges in $L^2(\Omega)$, and concludes the proof of \eqref{D-bound} in the case $p=2$.
The case $p>2$ follows by applying Minkowski inequality in $L^p(\Omega)$, and the hypercontractivity property
$\|A_n(z,x;t)\|_p \leq (p-1)^{n/2} \|A_n(z,x;t)\|_2$.

\medskip

It remains to prove \eqref{eq-1} for any $j=1,\ldots,n-1$. We will use the decomposition:
\begin{align}
\nonumber
	f_j^{(n)}(x_1,\ldots,x_{n-1},z,x;t)& =
	\int_{\{0<t_1<\ldots<t_{j-1}<r<t_j<\ldots<t_{n-1}<t\}}
	g_{n-j}(t_j,x_j,\ldots,t_{n-1},x_{n-1},r,z,t,x) \\
\label{f-j-n}
	& \quad \quad \quad f_{j-1}
	(t_1,x_1,\ldots,t_{j-1},x_{j-1},r,z)dt_1 \ldots dt_{n-1}dr
\end{align}
where
\begin{equation}
\label{def-g-nj}
g_{k}(t_1,x_1,\ldots,t_{k},x_{k},r,z,t,x) = G_{t-t_{k}}(x-x_{k}) \ldots G_{t_1-r}(x_1-z).
\end{equation}

We consider separately the cases $d=1$ and $d=2$.

\medskip

{\bf Case $d=1$.}  By \eqref{f-j-n} and Minkowski's inequality for integrals,
\begin{align}
\label{eq-estimate d=1}
	& \|f_j^{(n)}(\cdot,z,x;t)\|_{\cP_0^{\otimes (n-1)}}
	\leq \int_0^t
	\left(\int_{\{0<t_1<\ldots<t_{j-1}<r\}} \|f_{j-1}
	(t_1,\cdot,\ldots,t_{j-1},\cdot,r,z)\|_{\cP_0^{\otimes (j-1)}}dt_1 \ldots dt_{j-1}\right) \nonumber \\
	& \quad \left(\int_{\{r<t_j<\ldots<t_{n-1}<t\}}	\|g_{n-j}(t_j,\cdot,\ldots,t_{n-1},\cdot,r,z,t,x)\|_{\cP_0^{\otimes (n-j)}} dt_j \ldots dt_{n-1} \right)dr.
\end{align}
For each $r \in (0,t)$ fixed, we estimate separately the two integrals above.
Recall that $\cH_0 = L^2(\bR_{+};\cP_0)$. For the first integral, we use the Cauchy-Schwarz inequality:
\begin{align} \label{eq-estimate 1st term d=1}
	&\quad \left(\int_{\{0<t_1<\ldots<t_{j-1}<r\}} \|f_{j-1}
	(t_1,\cdot,\ldots,t_{j-1},\cdot,r,z)\|_{\cP_0^{\otimes (j-1)}}dt_1 \ldots dt_{j-1}\right)^2 \nonumber \\
	\le& \dfrac{r^{j-1}}{(j-1)!}
	\left(\int_{\{0<t_1<\ldots<t_{j-1}<r\}} \|f_{j-1}
	(t_1,\cdot,\ldots,t_{j-1},\cdot,r,z)\|_{\cP_0^{\otimes (j-1)}}^2 dt_1 \ldots dt_{j-1}\right) \nonumber \\
	=& \dfrac{r^{j-1}}{(j-1)!} \|f_{j-1}
	(\cdot,r,z)\|_{\cH_0^{\otimes (j-1)}}^2
    \le \dfrac{C^{j-1}}{[(j-1)!]^2},
\end{align}
where for the last inequality,we used (3.15) of \cite{BNQZ}
(which holds also for the white noise).
For the second integral, we use the fact that $G_t(x)=\frac{1}{2}1_{\{|x|<t\}}$ and
\begin{align*}
	\{|x-x_{n-1}| < t-t_{n-1}\} \cap \ldots \cap \{|x_j-z| < t_j-r\}
	\subset \{|x-z| < t-r\}.
\end{align*}
We have:
\begin{align*}
	& g_{n-j}(t_j,x_j,\ldots,t_{n-1},x_{n-1},r,z,t,x)
	= \dfrac{1}{2^{n-j+1}} \mathbf{1}_{\{|x-x_{n-1}| < t-t_{n-1}\}} \ldots \mathbf{1}_{\{|x_j-z| < t_j-r\}} \\
	& \quad = \dfrac{1}{2^{n-j}} 1_{\{|x-x_{n-1}| < t-t_{n-1}\}} \ldots \mathbf{1}_{\{|x_j-z| < t_j-r\}} \times \dfrac{1}{2} \mathbf{1}_{\{|x-z| < t-r\}} \\
	& \quad \leq \dfrac{1}{2^{n-j}} \mathbf{1}_{\{|x-x_{n-1}| < t-t_{n-1}\}} \ldots \mathbf{1}_{\{|x_{j+1}-x_j| < t_{j+1}-t_j\}} \times \dfrac{1}{2} \mathbf{1}_{\{|x-z| < t-r\}} \\
	& \quad =G_{t-t_{n-1}}(x-x_{n-1}) \ldots G_{t_{j+1}-t_j}(x_{j+1}-x_j) \times G_{t-r}(x-z) \\
	& \quad = f_{n-j} (t_j,x_j,\ldots,t_{n-1},x_{n-1},t,x) G_{t-r}(x-z).
\end{align*}
Applying again the Cauchy-Schwarz inequality, we obtain:
\begin{align} \label{eq-estimate 2nd term d=1}
	& \left( \int_{\{r<t_j<\ldots<t_{n-1}<t\}}	\|g_{n-j}(t_j,\cdot,\ldots,t_{n-1},\cdot,r,z,t,x)\|_{\cP_0^{\otimes (n-j)}} dt_j \ldots dt_{n-1} \right)^2 \nonumber \\
	\le& \dfrac{(t-r)^{n-j}}{(n-j)!} \int_{\{r<t_j<\ldots<t_{n-1}<t\}}	\|g_{n-j}(t_j,\cdot,\ldots,t_{n-1},\cdot,r,z,t,x)\|_{\cP_0^{\otimes (n-j)}}^2 dt_j \ldots dt_{n-1} \nonumber \\
	\le& \dfrac{(t-r)^{n-j}}{(n-j)!} G_{t-r}^2(x-z) \int_{\{r<t_j<\ldots<t_{n-1}<t\}} \|f_{n-j} (t_j,\cdot,\ldots,t_{n-1},\cdot,t,x) \|_{\cP_0^{\otimes (n-j)}}^2 dt_j \ldots dt_{n-1} \nonumber \\
	=& \dfrac{(t-r)^{n-j}}{(n-j)!} G_{t-r}^2(x-z) \int_{\{0<s_j<\ldots<s_{n-1}<t-r\}} \|f_{n-j} (s_j,\cdot,\ldots,s_{n-1},\cdot,t-r,x) \|_{\cP_0^{\otimes (n-j)}}^2 ds_j \ldots ds_{n-1} \nonumber \\
	=& \dfrac{(t-r)^{n-j}}{(n-j)!} G_{t-r}^2(x-z) \|f_{n-j}
	(\cdot,t-r,z)\|_{\cH_0^{\otimes (n-j)}}^2
	\le \dfrac{(t-r)^{n-j} C(t-r)^{n-j}}{[(n-j)!]^2} G_{t-r}^2(x-z),
\end{align}
where for the fourth line we used the change of variables $s_j=t_j-r$, and for the last line we used (3.15) of \cite{BNQZ}.
Relation \eqref{eq-1} follows substituting \eqref{eq-estimate 1st term d=1} and \eqref{eq-estimate 2nd term d=1} into \eqref{eq-estimate d=1}.

\medskip

{\bf Case $d=2$.}
Using \eqref{embed}, it suffices to estimate
$\| f_j^{(n)}(\cdot,z,x;t)\|_{L^{2q}(\bR^{2(n-1)})}^2$. 
We use decomposition \eqref{f-j-n}.
By Minkowski's inequality for integrals, 
\begin{align}
\nonumber
& \|f_j^{(n)}(\cdot,z,x;t)\|_{L^{2q}(\bR^{2(n-1)})} \leq \\
\nonumber
& \quad \int_0^t
\left(\int_{\{0<t_1<\ldots<t_{j-1}<r\}} \|f_{j-1}
(t_1,\cdot,\ldots,t_{j-1},\cdot,r,z)\|_{L^{2q}(\bR^{2(j-1)})}dt_1 \ldots dt_{j-1}\right) \\
\label{norm-fjn}
& \quad \left(\int_{\{r<t_j<\ldots<t_{n-1}<t\}}
\|g_{n-j}(t_j,\cdot,\ldots,t_{n-1},\cdot,r,z,t,x)\|_{L^{2q}(\bR^{2(n-j)})}
 dt_j \ldots dt_{n-1} \right)dr
\end{align}
For any $r\in (0,t)$ fixed, we estimate separately the two integrals.
Since $q<1$, $G_t^{2q}$ is integrable (see \eqref{p-norm-G}). Hence, for the first integral, we have:
\begin{align}
\nonumber
& \int_{\{0<t_1<\ldots<t_{j-1}<r\}} \|f_{j-1}
(t_1,\cdot,\ldots,t_{j-1},\cdot,r,z)\|_{L^{2q}(\bR^{2(j-1)})}dt_1 \ldots dt_{j-1} \\
\nonumber
& \quad = C^{j-1} \int_{\{0<t_1<\ldots<t_{j-1}<r\}} (r-t_{j-1})^{\frac{1-q}{q}} \ldots (t_2-t_1)^{\frac{1-q}{q}}dt_1 \ldots dt_{j-1}\\
\label{f-j-1}
& \quad =C^{j-1}\frac{\Gamma(1/q)^{j-1}r^{(j-1)/q}}{\Gamma((j-1)/q+1)} \leq \frac{C^{j-1}}{[(j-1)!]^{1/q}} t^{(j-1)/q} \leq \frac{C^{j-1}}{(j-1)!} t^{(j-1)/q},
\end{align}
where we used Stirling's formula for the second last inequality. As for the second integral, by the Cauchy-Schwarz inequality, we have:
\begin{align*}
& [I_{z,t,x}^{(j,n)}(r)]^2 :=\left(\int_{\{r<t_j<\ldots<t_{n-1}<t\}}
\|g_{n-j}(t_j,\cdot,\ldots,t_{n-1},\cdot,r,z,t,x)\|_{L^{2q}(\bR^{2(n-j)})}
 dt_j \ldots dt_{n-1}\right)^2  \\
& \quad \leq \frac{(t-r)^{n-j}}{(n-j)!} \int_{\{r<t_j<\ldots<t_{n-1}<t\}}
\|g_{n-j}(t_j,\cdot,\ldots,t_{n-1},\cdot,r,z,t,x)\|_{L^{2q}(\bR^{2(n-j)})}^2
 dt_j \ldots dt_{n-1} \\
& \quad = : \frac{(t-r)^{n-j}}{(n-j)!} \cT_{n-j+1}(r,z,t,x).
\end{align*}
By relations (3.18), (3.20), (3.22) and (3.23) of \cite{BNQZ}, we know that
\begin{align*}
& \cT_{n-j+1}(r,z,t,x)\\
&=\int_{\{r<t_j<\ldots<t_{n-1}<t\}}
\left(\int_{\bR^{2(n-j)}} G_{t-t_{n-1}}^{2q}(x-x_{n-1})\ldots G_{t_j-r}^{2q}(x_j-z) dx_1 \ldots dx_j \right)^{1/q} dt_j \ldots dt_{n-1}\\
& \leq
\left\{
\begin{array}{ll}
C G_{t-r}^{2-1/q}(x-r)  & \mbox{if $n-j+1=2,3,4$} \\
C^{n-j+1} \frac{1}{(n-j+2)!} 1_{\{|x-z|<t-r\}} & \mbox{if $n-j+1 \geq 5$}.
\end{array} \right.
\end{align*}
Hence,
\begin{align*}
I_{z,t,x}^{(j,n)}(r) \leq
\left\{
\begin{array}{ll}
C G_{t-r}^{1-1/(2q)}(x-r)  & \mbox{if $n-j=1,2,3$} \\
C^{n-j+1} \frac{1}{[(n-j)!]^{1/2}} \frac{1}{[(n-j+2)!]^{1/2}} 1_{\{|x-z|<t-r\}} & \mbox{if $n-j \geq 4$},
\end{array} \right.
\end{align*}
Using properties \eqref{G-monotone} and \eqref{I-less-G} of $G$, we obtain that:
\begin{equation}
\label{bound-I-ztx}
I_{z,t,x}^{(j,n)}(r) \leq \frac{C^{n-j+1}}{(n-j+1)!} G_{t-r}(x-z) \quad \mbox{if $n-j\geq 1$}.
\end{equation}
Coming back to \eqref{norm-fjn}, and using \eqref{f-j-1} and \eqref{bound-I-ztx}, we obtain that for any $j=1,\ldots,n-1$,
\begin{equation}
\label{bound-fjn}
\|f_j^{(n)}(\cdot,z,x;t)\|_{L^{2q}(\bR^{2(n-1)})} \leq C^n \frac{1}{(j-1)! (n-j+1)!} \int_0^t G_{t-r}(x-z)dr.
\end{equation}
By \eqref{embed}, this concludes the proof of \eqref{eq-1} in the case $d=2$.
\end{proof}

We end this section with a similar estimate for the second Malliavin derivative.

\begin{theorem}
\label{main-th2}
Under the hypotheses of Theorem \ref{main-th1}, for any $t>0$, $x,w,z \in \bR^d$ and $p\geq 2$,
\begin{equation}
\label{bound-D2}
\|D_{w,z}^2 u(t,x) \|_p \leq C \widetilde{f}_2(w,z,x;t),
\end{equation}
where $C>0$ is a constant that depends on $(p,t,\gamma)$ and is increasing in $t$.
\end{theorem}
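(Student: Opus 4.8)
The plan is to follow the proof of Theorem \ref{main-th1}, now inserting \emph{two} variables $w,z$ instead of one. By \eqref{e-fn} we have $\sum_{n\geq 2}[n(n-1)]^2(n-2)!\,\|\widetilde{f}_n(\cdot,x;t)\|_{\cP_0^{\otimes n}}^2\leq\sum_{n\geq 2}(t^2 D_t C_\mu)^n/(n-2)!<\infty$, so $u(t,x)\in\bD^{2,2}$, and differentiating the chaos expansion \eqref{eq-chaos expansion} twice gives
\begin{equation*}
D_{w,z}^2 u(t,x)=\sum_{n\geq 2}n(n-1)\,I_{n-2}\big(\widetilde{f}_n(\cdot,w,z,x;t)\big):=\sum_{n\geq 2}B_n(w,z,x;t)\quad\mbox{in }L^2(\Omega).
\end{equation*}
For $1\leq j\neq k\leq n$, let $f_{j,k}^{(n)}(\cdot,w,z,x;t)$ be the function obtained from $f_n(\cdot,x;t)$ by inserting $w$ in slot $j$ and $z$ in slot $k$ (the analogue of $f_j^{(n)}$ from \eqref{decomp}); regrouping the $n!$ permutations defining $\widetilde{f}_n$ according to the positions of $w,z$ yields $n(n-1)\widetilde{f}_n(\cdot,w,z,x;t)=\sum_{j\neq k}\widetilde{f_{j,k}^{(n)}}(\cdot,w,z,x;t)$, hence $B_n=\sum_{j\neq k}I_{n-2}(f_{j,k}^{(n)})$. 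Exactly as in \eqref{e-An} --- via $|\sum_i a_i|^2\leq(\#\text{terms})\sum_i|a_i|^2$, the bound $\bE|I_{n-2}(f)|^2\leq(n-2)!\,\|f\|_{\cP_0^{\otimes(n-2)}}^2$, and (for $p>2$) hypercontractivity with Minkowski's inequality in $L^p(\Omega)$ --- the whole statement reduces to showing that, for all $j\neq k$, with $a:=j\wedge k$ and $b:=j\vee k$,
\begin{equation}
\label{plan-key}
\|f_{j,k}^{(n)}(\cdot,w,z,x;t)\|_{\cP_0^{\otimes(n-2)}}\leq\frac{C^n}{(a-1)!\,(b-a-1)!\,(n-b)!}\,\widetilde{f}_2(w,z,x;t),
\end{equation}
where $C$ depends on $(q,\gamma,t)$ and is increasing in $t$.

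To prove \eqref{plan-key} I would split $f_{j,k}^{(n)}$ into three time-ordered blocks. Since $\widetilde{f}_2$ is symmetric in $(w,z)$ and $f_2(w,z,x;t)\leq 2\widetilde{f}_2(w,z,x;t)$ (and likewise with $w,z$ exchanged), it suffices to treat $j<k$. Writing $r,\rho$ for the integration times attached to slots $j,k$, and relabelling the $n-2$ free variables in order of their slots, the definition of $f_n$ shows that
\begin{equation*}
f_{j,k}^{(n)}(x_1,\ldots,x_{n-2},w,z,x;t)=\int_{0<r<\rho<t}B_1(r)\otimes B_2(r,\rho)\otimes B_3(\rho)\,dr\,d\rho,
\end{equation*}
where $B_1(r)=f_{a-1}(\cdot,w;r)$ is the chain from time $0$ to $w$, $B_2(r,\rho)$ is the $g$-type kernel \eqref{def-g-nj} joining $w$ at time $r$ to $z$ at time $\rho$ (with its $b-a-1$ internal times integrated over $r<\cdots<\rho$), and $B_3(\rho)$ is the $g$-type kernel joining $z$ at time $\rho$ to $x$ at time $t$. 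Applying Minkowski's inequality for integrals to pull out $dr\,d\rho$, and using that the $\cP_0^{\otimes(n-2)}$-norm --- or, when $d=2$, the $L^{2q}$-norm via \eqref{embed} --- of a tensor product factorizes, I obtain
\begin{equation*}
\|f_{j,k}^{(n)}(\cdot,w,z,x;t)\|_{\cP_0^{\otimes(n-2)}}\leq\int_{0<r<\rho<t}\|B_1(r)\|\,\|B_2(r,\rho)\|\,\|B_3(\rho)\|\,dr\,d\rho,
\end{equation*}
the norms being in $\cP_0^{\otimes(a-1)}$, $\cP_0^{\otimes(b-a-1)}$, $\cP_0^{\otimes(n-b)}$ respectively (or the analogous $L^{2q}$-spaces).

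Each block is then controlled by an estimate already established in the proof of Theorem \ref{main-th1}. For $B_1$: relation \eqref{e-fn} (if $d=1$) or \eqref{f-j-1} (if $d=2$) gives $\|B_1(r)\|\leq C^{a-1}/(a-1)!$, uniformly in $r\leq t$ and in $w$. For $B_3$: the nested-ball inclusion used in deriving \eqref{eq-estimate 2nd term d=1} (if $d=1$), or the bound \eqref{bound-I-ztx} coming from (3.18), (3.20), (3.22) and (3.23) of \cite{BNQZ} (if $d=2$), gives $\|B_3(\rho)\|\leq\frac{C^{n-b}}{(n-b)!}G_{t-\rho}(x-z)$. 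The block $B_2$ has exactly the same structure as $B_3$ with $(\rho,z,t,x)$ replaced by $(r,w,\rho,z)$, so the identical argument yields $\|B_2(r,\rho)\|\leq\frac{C^{b-a-1}}{(b-a-1)!}G_{\rho-r}(z-w)$. Substituting these bounds and using $\int_{0<r<\rho<t}G_{\rho-r}(z-w)\,G_{t-\rho}(x-z)\,dr\,d\rho=f_2(w,z,x;t)\leq 2\widetilde{f}_2(w,z,x;t)$ gives \eqref{plan-key}.

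It remains to sum over $j\neq k$. Each unordered pair $\{j,k\}$ (say $j<k$) corresponds bijectively to a triple $(a-1,b-a-1,n-b)$ of non-negative integers summing to $n-2$, so by $\sum_{i_1+i_2+i_3=m}\binom{m}{i_1,i_2,i_3}^2\leq(3^m)^2$ we get $\sum_{j\neq k}\|f_{j,k}^{(n)}(\cdot,w,z,x;t)\|_{\cP_0^{\otimes(n-2)}}^2\leq C^n[(n-2)!]^{-2}\widetilde{f}_2(w,z,x;t)^2$, hence $\bE|B_n|^2\leq n(n-1)(n-2)!\cdot C^n[(n-2)!]^{-2}\widetilde{f}_2^2=C^n\frac{n(n-1)}{(n-2)!}\widetilde{f}_2^2$. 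This is summable in $n$, which simultaneously justifies the $L^2(\Omega)$-convergence of the series defining $D_{w,z}^2 u(t,x)$ and gives \eqref{bound-D2} for $p=2$; the case $p>2$ follows from $\|B_n\|_p\leq(p-1)^{(n-2)/2}\|B_n\|_2$. I expect the only delicate point to be the middle block $B_2$: unlike anything appearing in Theorem \ref{main-th1}, it joins two \emph{inserted} points and lies in the interior of the chain, but the collapsing device --- nested balls for $d=1$, the $L^{2q}$ estimates of \cite{BNQZ} for $d=2$ --- applies to it verbatim, so the genuine work is little more than the bookkeeping of the three families of factorials.
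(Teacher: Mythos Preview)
Your proposal is correct and follows essentially the same route as the paper's proof: the same three-block decomposition of $f_{j,k}^{(n)}$ (the paper's $f_{ij}^{(n)}$), the same block-by-block estimates borrowed from Theorem \ref{main-th1} (including the treatment of the interior block $B_2$, which the paper handles identically via \eqref{eq-estimate 2nd term d=1} for $d=1$ and \eqref{bound-I-ztx} for $d=2$), and the same multinomial summation. The only cosmetic differences are that the paper keeps the two orderings $i<j$ and $j<i$ separate, getting $f_2(w,z,x;t)$ and $f_2(z,w,x;t)$ on the right-hand side rather than $\widetilde{f}_2$, and records slightly different factorials ($(i-1)!(j-i)!(n-j+1)!$ in \eqref{f-ij-n1}) than your $(a-1)!(b-a-1)!(n-b)!$, but both versions feed into the same multinomial bound.
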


\begin{proof}
{\em Step 1.} We first prove that for any $w,z \in \bR^d$ fixed, the following series converges in $L^2(\Omega)$:
\begin{equation}
\label{D2-series}
D_{w,z}^2 u(t,x)=\sum_{n\geq 2}n(n-1)I_{n-2}(\widetilde{f}_n(\cdot,w,z,x;t))=: \sum_{n\geq 2}B_n(w,z,x;t)
\end{equation}
We need to evaluate $\bE|B_n(w,z,x;t)|^2$. The first term of this series is
\begin{align*}
& B_2(w,z,x;t)=2 \widetilde{f}_2(w,z,x;t)=f_2(w,z,x;t)+f_2(z,w,x;t) \\
& \quad =\int_{0<\theta<r<t}G_{t-r}(x-z)G_{r-\theta}(z-w)drd\theta+
\int_{0<r<\theta<t}G_{t-\theta}(x-w)G_{\theta-r}(w-z)drd\theta.
\end{align*}

 Note that we have the following decomposition:
 \begin{equation}
 \label{f-ij-n}
 \widetilde{f}_n(\cdot,w,z,x;t)=\frac{1}{n(n-1)} \sum_{i,j=1,i\not=j}^{n} h_{ij}^{(n)}(\cdot,w,z,x;t),
 \end{equation}
 where $h_{ij}^{(n)}(\cdot,w,z,x;t)$ is the symmetrization of the function $f_{ij}^{(n)}(\cdot,w,z,x;t)$ defined as follows. If $i<j$,
\begin{align*}
& f_{ij}^{(n)}(x_1,\ldots,x_{n-2},w,z,x;t)=f_n(x_1,\ldots,x_{i-1},w,x_{i},
\ldots,x_{j-2},z,x_{j-1},\ldots,x_{n-2},x;t) \\
& =
\int_{\{t_1<\ldots<t_{i=1}<\theta<t_i<\ldots<t_{j-2}<r<t_{j-1}<\ldots<
t_{n-2}<t\}}
G_{t-t_{n-2}}(x-x_{n-2}) \ldots G_{t_{j-1}-r}(x_{j-1}-z) \\
& \quad G_{r-t_{j-2}}(z-x_{j-2}) \ldots G_{t_i-\theta}(x_i-w)G_{\theta-t_{i-1}}(w-x_{i-1}) \ldots G_{t_2-t_1}(x_2-x_1) dt_1 \ldots dt_{n-2}drd\theta.
 \end{align*}
If $j<i$,
 \[
 f_{ij}^{(n)}(x_1,\ldots,x_{n-2},w,z,x;t)=f_n(x_1,\ldots,x_{j-1},z,x_{j},
\ldots,x_{i-2},w,x_{i-1},\ldots,x_{n-2},x;t)
\]
 In both cases, $w$ is on position $i$ and $z$ is on position $j$.

Hence,
\[
B_n(w,z,x;t)=n(n-1) I_{n-2}(\widetilde{f}_n(\cdot,w,z,x;t))=\sum_{i,j=1,i\not=j}^n I_{n-2}(h_{ij}^{(n)}(\cdot,w,z,x;t))
\]
and
\begin{align*}
\bE|B_n(w,z,x;t)|^2 & \leq n(n-1) \sum_{i,j=1,i\not=j}^n \bE|I_{n-2}(h_{ij}^{(n)}(\cdot,w,z,x;t))|^2 \\
&=n(n-1)\sum_{i,j=1,i\not=j}^n (n-2)! \|h_{ij}^{(n)}(\cdot,w,z,x;t))\|_{\cP_0^{\otimes (n-2)}}^2\\
& \leq n(n-1)\sum_{i,j=1,i\not=j}^n (n-2)! \|f_{ij}^{(n)}(\cdot,w,z,x;t))\|_{\cP_0^{\otimes (n-2)}}^2
\end{align*}

We will prove below that
\begin{align}
\label{f-ij-n1}
\|f_{ij}^{(n)}(\cdot,w,z,x;t)\|_{\cP_0^{\otimes (n-2)}} \leq \frac{C^n}{(i-1)!(j-i)!(n-j+1)!}f_2(w,z,x;t) \quad \mbox{if} \quad i<j \\
\label{f-ij-n2}
\|f_{ij}^{(n)}(\cdot,w,z,x;t)\|_{\cP_0^{\otimes (n-2)}} \leq \frac{C^n}{(j-1)!(i-j)!(n-i+1)!}f_2(z,w,x;t) \quad \mbox{if} \quad j<i.
\end{align}

Then
\begin{align*}
\bE|B_n(w,z,x;t)|^2 & \leq n! \sum_{i,j=1,i<j}^{n}\frac{C^n}{[(i-1)! (j-i)!(n-j+1)!]^2} f_2^2 (w,z,x;t)+ \\
& \quad n! \sum_{i,j=1,j<i}^{n}\frac{C^n}{[(j-1)! (i-j)!(n-i+1)!]^2} f_2^2 (z,w,x;t)\\
& \leq \frac{C^n}{n!} ( f_2^2 (w,z,x;t)+ f_2^2 (z,w,x;t)),
\end{align*}
using the fact that:
\begin{align*}
	\sum_{k_1+k_2+k_3=n} \binom{n}{k_1, k_2, k_3}^2
	\le \left( \sum_{k_1+k_2+k_3=n} \binom{n}{k_1, k_2, k_3} \right)^2
	= 9^n
\end{align*}

This proves that the series \eqref{D2-series} converges in $L^2(\Omega)$.

\medskip

{\em Step 2.} We prove \eqref{bound-D2}. By hypercontractivity and the decomposition \eqref{f-ij-n},
\begin{align*}
\|D_{w,z}^{2}u(t,x)\|_p & \leq \sum_{n\geq 2} (p-1)^{n/2} [(n-2)!]^{1/2} \sum_{i,j=1,i<j}^{n}\|h_{ij}^{(n)}(\cdot,w,z,x;t)\|_{\cP_0^{\otimes (n-2)}}\\
&  \leq \sum_{n\geq 2} (p-1)^{n/2} [(n-2)!]^{1/2} \sum_{i,j=1,i<j}^{n}\|f_{ij}^{(n)}(\cdot,w,z,x;t)\|_{\cP_0^{\otimes (n-2)}}.
\end{align*}
Relation \eqref{bound-D2} follows using \eqref{f-ij-n1} and \eqref{f-ij-n2} and the fact that
\[
\sum_{k_1+k_2+k_3=n} \left(\begin{array}{c}n \\ k_1,k_2,k_3 \end{array}\right) = 3^n.
\]

\medskip

{\em Step 3.} It remains to prove \eqref{f-ij-n1} (the proof of \eqref{f-ij-n2} is similar).
For $i<j$, we write
\begin{align*}
	& f_{ij}^{(n)}(x_1,\ldots,x_{n-2},w,z,x;t) \\
	=& \int_{\{0<t_1<\ldots<t_{i-1}<\theta<t_i<\ldots< t_{j-2}<r<t_{j-1}<\ldots<t_{n-2}<t\}} g_{n-j}(t_{j-1},x_{j-1},\ldots,t_{n-2},x_{n-2},r,z,t,x) \\
	& g_{j-i-1}(t_{i},x_{i},\ldots,t_{j-2},x_{j-2},\theta,w,r,z) f_{i-1} (t_1,x_1,\ldots,t_{i-1},x_{i-1},\theta,w) dt_1 \ldots dt_{n-2}drd\theta,
\end{align*}
where the function $g_{k}$ is given by \eqref{def-g-nj}.

For the case $d=1$, we apply the Minkowski's inequality for the norm $\|\cdot\|_{\cP_0^{\otimes (n-2)}}$ and the separation of variables to obtain
\begin{align*}
	& \big\| f_{ij}^{(n)}(\cdot,w,z,x;t) \big\|_{\cP_0^{\otimes (n-2)}} \\
	\leq & \int_{0<\theta<r<t} drd\theta
	\left(\int_{\{0<t_1<\ldots<t_{i-1}<\theta\}} \big\| f_{i-1} (t_1,\cdot,\ldots,t_{i-1},\cdot,\theta,w) \big\|_{\cP_0^{\otimes (i-1)}} dt_1\ldots dt_{i-1}\right) \\
	& \left(\int_{\{\theta<t_i<\ldots< t_{j-2}<r\}} \big\| g_{j-i-1}(t_{i},\cdot,\ldots,t_{j-2},\cdot,\theta,w,r,z) \big\|_{\cP_0^{\otimes (j-i-1)}} dt_i \ldots dt_{j-2}\right) \\
	& \left(\int_{\{r<t_{j-1}<\ldots<t_{n-2}<t\}} \big\| g_{n-j}(t_{j-1},\cdot,\ldots,t_{n-2},\cdot,r,z,t,x) \big\|_{\cP_0^{\otimes (n-j)}} dt_{j-1} \ldots dt_{n-2}\right) \\
	\le& \int_{0<\theta<r<t} \dfrac{C^{i-1}}{(i-1)!} \times \dfrac{C^{j-i-1}}{(j-i-1)!} G_{r-\theta}(z-w) \times \dfrac{C^{n-j}}{(n-j)!} G_{t-r}(x-z) drd\theta \\
	=& \dfrac{C^n}{(i-1)!(j-i-1)!(n-j)!} f_2(w,z,x;t),
\end{align*}
where we use \eqref{eq-estimate 1st term d=1} and \eqref{eq-estimate 2nd term d=1} in the last inequality.

For the case $d=2$, one can compute the norm $\|\cdot\|_{L^{2q}(\bR^{2(n-2)})}$ first. Applying the Minkowski's inequality and the separation of variables, we have
\begin{align*}
	& \| f_{ij}^{(n)}(\cdot,w,z,x;t) \|_{L^{2q}(\bR^{2(n-2)})} \\
	\leq & \int_{\{0<\theta<r<t\}} drd\theta
	\left(\int_{\{0<t_1<\ldots<t_{i-1}<\theta\}} \big\| f_{i-1} (t_1,\cdot,\ldots,t_{i-1},\cdot,\theta,w) \big\|_{L^{2q}(\bR^{2(i-1)})} dt_1\ldots dt_{i-1} \right)\\
	& \left(\int_{\{\theta<t_i<\ldots<t_{j-2}<r\}} \big\| g_{j-i-1}(t_{i},\cdot,\ldots,t_{j-2},\cdot,\theta,w,r,z) \big\|_{L^{2q}(\bR^{2(j-i-1)})} dt_i\ldots dt_{j_2} \right)\\
	& \left(\int_{\{r<t_1<\ldots<t_{i-1}<t\}} \big\| g_{n-j}(t_{j-1},\cdot,\ldots,t_{n-2},\cdot,r,z,t,x) \big\|_{L^{2q}(\bR^{2(n-j)})} dt_{j-1}\ldots dt_{n-2}\right) \\
	\le& \int_{\{0<\theta<r<t\}} \dfrac{C^{i-1}}{(i-1)!} t^{(i-1)/q} \times \dfrac{C^{j-i}}{(j-i)!} G_{r-\theta}(z-w) \times \dfrac{C^{n-j+1}}{(n-j+1)!} G_{t-r}(x-z) drd\theta \\
	=& \dfrac{C^n}{(i-1)!(j-i)!(n-j+1)!} f_2(w,z,x;t),
\end{align*}
where we use \eqref{f-j-1} and \eqref{bound-I-ztx} in the last inequality. The desired result \eqref{f-ij-n1} follows directly from \eqref{embed}.

\end{proof}

\begin{remark}
{\rm
Using a similar argument, one can extend \eqref{bound-D2} to higher order Malliavin derivatives $D^m u(t,x)$ for $m\geq 3$, as it was done in \cite{BNQZ} for the colored noise in time.
}
\end{remark}

\section{Proof of Theorem \ref{CLT-integr}}
\label{section-CLT1}

In this section, we give the proof of Theorem \ref{CLT-integr}.
We treat only the case of integrable function $\gamma$. The case of the white noise with $d=1$ is similar, and is omitted.

Since $\gamma$ is integrable, $\mu$ has density function $g$ given by:
\[
g(\xi)=\frac{1}{(2\pi)^d}\int_{\bR^d}e^{i\xi \cdot x}\gamma(x)dx, \quad \xi \in \bR^d.
\]
Note that $g$ is continuous and bounded. More precisely, $\|g\|_{\infty} \leq (2\pi)^{-d} \|\gamma\|_{L^1(\bR^d)}$.

\subsection{Proof of Theorem \ref{CLT-integr}.(i)}
\label{section-proof-i}

Recalling the definition \eqref{def-rho} of the covariance function $\rho_{t,s}$, we have by Fubini theorem,
\[
\frac{\bE [F_R(t)F_R(s)]}{R^d}= \frac{1}{R^d}\int_{B_R^2} \rho_{t,s}(x-y) dxdy =\int_{B_R} \frac{{\rm Leb}(B_R \cap B_R(-z))}{R^d}  \rho_{t,s}(z)dz.
\]
We now intend to apply the dominated convergence theorem, using the fact that
$\frac{{\rm Leb}(B_R \cap B_R(-z))}{R^d}$ converges to $\omega_d$ as $R \to \infty$, and is bounded by $\omega_d$. But to justify the application of this theorem, we have to prove that:
\begin{equation}
\label{cov-summable}
\int_{\bR^d}\rho_{t,s}(z)dz<\infty,
\end{equation}
and for this, {\em we will use the fact that $\gamma$ is integrable}.
(Note that the converse is also true: if \eqref{cov-summable} holds then $\gamma$ is integrable. This is due to the non-negativity of $\alpha_n(z;t,s)$ and relations \eqref{int-rho} and \eqref{ineq-n1} below.)
First, note that
\begin{equation} \label{int-rho}
	\int_{\bR^d} \rho_{t,s}(z)dz
	=\sum_{n\geq 1} \frac{1}{n!} \int_{\bR^d} \alpha_n(z;t,s)dz.
\end{equation}

We consider first the case $n=1$. By direct calculation,
\begin{align} \nonumber
	\int_{\bR^d}\alpha_1(z;t,s)dz&=\int_{\bR^d}\langle f_1(x_1,z;t),f_1(y_1,0;s)\rangle_{\cP_0}dz\\
	\nonumber
	&=\int_0^t \int_0^s \int_{(\bR^d)^3} G_{t-t_1}(z-x_1)G_{s-s_1}(y_1)\gamma(x_1-y_1)dzdx_1 dy_{1} ds_1 dt_1\\
	\label{ineq-n1}
	&=
	\|\gamma\|_{L^1(\bR^d)} \int_0^t \int_0^s(t-t_1)(s-s_1)ds_1 dt_1=\|\gamma\|_{L^1(\bR^d)}\frac{t^2+s^2}{2},
\end{align}
integrating the variables $z,x_1,y_1$ in this order.

Next, we consider the case $n\geq 2$. By the monotone convergence theorem,
\begin{equation}
\label{MCT}
\int_{\bR^d}\alpha_n(z;t,s)dz=\lim_{\e \downarrow 0} \int_{\bR^d}\alpha_n(z;t,s)e^{-\frac{\e|z|^2}{2}}dz.
\end{equation}
We assume that $s\leq t$.
By \eqref{eq-inner product}, we have
\begin{align*}
T_{n,\e}:=& \int_{\bR^d} \alpha_n(z;t,s) e^{-\frac{\e |z|^2}{2}} dz
=  n! (2\pi)^{d} \sum_{\rho \in S_n} \int_{T_n(t)} \int_{T_n(s)} \int_{(\bR^d)^n}
\prod_{j=1}^n \cF G_{t_{j+1}-t_j}(\xi_1+\ldots+\xi_j) \\
& \quad \quad \quad \prod_{j=1}^n \cF G_{s_{j+1}-s_j}(\xi_{\rho(1)}+\ldots+\xi_{\rho(j)})
p_{\e}(\sum_{j=1}^n \xi_{j})
	  \mu(d\xi_1) \ldots \mu(d\xi_n)d\pmb{t} d\pmb{s}\\
\leq &  n! (2\pi)^{d} t^2 \int_{T_n(t)} \int_{(\bR^d)^n}
\prod_{j=1}^n |\cF G_{t_{j+1}-t_j}(\xi_1+\ldots+\xi_j)|^2
p_{\e}(\sum_{j=1}^n \xi_{j})
	  \mu(d\xi_1) \ldots \mu(d\xi_n)d\pmb{t},
\end{align*}
where for the second line we used the fact that $\int_{\bR^d} e^{-i \xi \cdot z} e^{-\frac{\e |z|^2}{2}}  dz=(2\pi)^d p_{\e}(\xi)$ with $p_{\e}(x)=(2\pi\e)^{-d/2} e^{-|x|^2/(2\e)}$, and for the last line we applied Lemma \ref{Lem-inner product inequality} to the measure
$\mu_n(d\xi_1 \ldots d\xi_n) = p_{\e}(\sum_{j=1}^n \xi_{j})\mu(d\xi_1) \ldots \mu(d\xi_n)$.
Using the fact that that $\mu(d\xi)=g(\xi)d\xi$ and the change the variables $\eta_j = \xi_1 + \ldots + \xi_j$ for $j=1,\ldots,n$ (with $\eta_0=0$), we obtain:
\begin{align*}
T_{n,\e}	\leq & n! (2\pi)^{d} t^2 \int_{T_n(t)} \int_{(\bR^d)^{n-1}} \prod_{j=1}^{n-1} \left| \cF G_{t_{j+1}-t_j}(\eta_j) \right|^2
\prod_{j=1}^{n-1} g(\eta_j-\eta_{j-1}) \\
& \quad \left(\int_{\bR^d}\left| \cF G_{t-t_n}(\eta_n) \right|^2  p_{\e}(\eta_n)g(\eta_n-\eta_{n-1})d\eta_n\right)  d\eta_1 \ldots d\eta_{n-1} d\pmb{t}
\end{align*}
For the inner integral, we use the fact that $\|g\|_{\infty}\leq (2\pi)^{-d}\|\gamma \|_{L^1(\bR^d)}$, and so,
\begin{align*}
(2\pi)^{d}\int_{\bR^d}\left| \cF G_{t-t_n}(\eta_n) \right|^2  p_{\e}(\eta_n)\varphi(\eta_n-\eta_{n-1})d\eta_n & \leq  \|\gamma\|_{L^1(\bR^d)} \int_{\bR^d}\frac{\sin^2((t-t_n)|\eta_n|)}{|\eta_n|^2}p_{\e}(\eta_n)d\eta_n\\
 &\leq \|\gamma\|_{L^1(\bR^d)} (t-t_n)^2 \leq \|\gamma\|_{L^1(\bR^d)} t^2.
\end{align*}
Hence
\begin{align*}
T_{n,\e}& \leq n! t^4 \|\gamma\|_{L^{1}(\bR^d)} \int_{T_n(t)}  \int_{(\bR^d)^{n-1}} \prod_{j=1}^{n-1} \left| \cF G_{t_{j+1}-t_j}(\eta_j) \right|^2 \prod_{j=1}^{n-1} g(\eta_j-\eta_{j-1}) d\eta_1 \ldots d\eta_{n-1} d\pmb{t}\\
	&= n! t^4 \|\gamma\|_{L^{1}(\bR^d)} \int_{T_n(t)}\int_{(\bR^d)^{n-1}} \prod_{j=1}^{n-1} \left| \cF G_{t_{j+1}-t_j}(\xi_1 + \ldots + \xi_j) \right|^2  \mu(d\xi_1) \ldots \mu(d\xi_{n-1}) d\pmb{t}\\
& \leq t^4 \|\gamma\|_{L^{1}(\bR^d)} (D_t C_{\mu})^{n-1} t^n,
\end{align*}
where $D_t=2(t^2 \vee 1)$ and $C_{\mu}=\int_{\bR^d}
\frac{1}{1+|\xi|^2}\mu(d\xi)$ (see \eqref{Jn-bound} for the last inequality).
This bound is independent of $\e$. Therefore, by \eqref{MCT}, it follows that for any $n\geq 2$,
\begin{equation}
\label{ineq-n2}
\int_{\bR^d} \alpha_n(z;t,s)dz \leq  t^4 \|\gamma\|_{L^{1}(\bR^d)} (D_t C_{\mu})^{n-1} t^n.
\end{equation}
Coming back to \eqref{int-rho} and using \eqref{ineq-n1} and \eqref{ineq-n2}, we obtain:
\[
\int_{\bR^d}\rho_{t,s}(z)dz \leq t^4 \|\gamma\|_{L^1(\bR^d)} \left( 1+\sum_{n\geq 2}\frac{1}{n!} (D_t C_{\mu})^{n-1} t^n \right)<\infty.
\]
This concludes the proof of \eqref{cov-summable}.

\subsection{Proof of Theorem \ref{CLT-integr}.(ii)}
\label{section-proof-ii}

We apply Proposition 1.8 of \cite{BNQZ}, which continues to hold for the time-independent noise with obvious modifications. We obtain:
\begin{equation}
\label{dTV-bound}
d_{TV}\left( \frac{F_R(t)}{\sigma_R(t)},Z\right)=d_{TV}(F_R(t),N)\leq \frac{4}{\sigma_R^2(t)}\sqrt{\cA}
\end{equation}
where $N \sim N(0,\sigma_R^2(t))$ and
\begin{align*}
\cA & =\int_{(\bR^d)^6} \|D_{z,w}^2 F_R(t)\|_4 \|D_{y,w'}^2 F_R(t)\|_4
\|D_{z'} F_R(t)\|_4 \|D_{y'} F_R(t)\|_4 \\
& \qquad \qquad \qquad \quad \gamma(y-y')\gamma(z-z')\gamma(w-w')dydy'dzdz'dwdw'.
\end{align*}
Since $\sigma_R^2(t) \sim K(t,t) R^d$ (by part (i)), it is enough to prove that
\begin{equation}
\label{bound-A-integr}
\cA\leq C R^d,
\end{equation}
where $C>0$ is a constant that depends on $(t,\gamma,d)$.

By Minkowski's inequality and Theorem \ref{main-th2}, for any $z,w \in \bR^d$,
\begin{align*}
& \|D_{z,w}^2 F_R(t)\|_4 =\left\|\int_{B_R} D_{z,w}^2 u(t,x) dx \right\|_4 \leq \int_{B_R} \|D_{z,w}^2 u(t,x) \|_4 dx \leq C \int_{B_R}\widetilde{f}_2(w,z,x;t)dx= \\
& C\int_{B_R}\left(\int_{0<\theta<r<t} G_{t-r}(x-z)G_{r-\theta}(z-w) dr d\theta  + \int_{0<r<\theta<t}G_{t-\theta}(x-w)G_{\theta-r}(w-z) dr d\theta  \right)dx.
\end{align*}
Similarly, by Minkowski's inequality and Theorem \ref{main-th1}, for any $z \in \bR^d$,
\begin{align*}
& \|D_{z}F_R(t)\|_4 =\left\|\int_{B_R} D_{z}u(t,x) dx \right\|_4 \leq \int_{B_R} \|D_{z} u(t,x) \|_4 dx \leq C \int_{B_R} \int_0^t G_{t-s}(x-z) dsdx.
\end{align*}
It follows that
\begin{equation}
\label{4terms-A}
\cA \leq C \sum_{j=1}^4\cA_{j},
\end{equation}
where
\begin{align*}
\cA_{1}&=\int_{[0,t]^2} \int_{0<\theta<r<t} \int_{0<\theta'<r'<t}
\int_{B_R^4} \int_{(\bR^d)^6} G_{t-r}(x_1-z) G_{r-\theta} (z-w) G_{t-r'}(x_1'-y) \\
& \qquad \qquad \qquad G_{r'-\theta'}(y-w')  G_{t-s}(x_2-z') G_{t-s'}(x_2'-y') \gamma(y-y')\gamma(z-z')\gamma(w-w') \\
&  \qquad \qquad \qquad dydy' dzdz' dwdw' dx_1 dx_2 dx_1' dx_2' dr' d\theta' dr d\theta  dsds'\\
\cA_{2}&=\int_{[0,t]^2} \int_{0<\theta<r<t} \int_{0<r'<\theta'<t}
\int_{B_R^4} \int_{(\bR^d)^6} G_{t-r}(x_1-z) G_{r-\theta} (z-w) G_{t-\theta'}(x_1'-w) \\
& \qquad \qquad \qquad G_{\theta'-r'}(w'-y)  G_{t-s}(x_2-z') G_{t-s'}(x_2'-y') \gamma(y-y')\gamma(z-z')\gamma(w-w') \\
&  \qquad \qquad \qquad dydy' dzdz' dwdw' dx_1 dx_2 dx_1' dx_2' dr' d\theta' dr d\theta  dsds'\\
\cA_{3}&=\int_{[0,t]^2} \int_{0<r<\theta<t} \int_{0<\theta'<r'<t}
\int_{B_R^4} \int_{(\bR^d)^6} G_{t-\theta}(x_1-w) G_{\theta-r} (w-z) G_{t-r'}(x_1'-y)  \\
& \qquad \qquad \qquad G_{r'-\theta'}(y-w') G_{t-s}(x_2-z') G_{t-s'}(x_2'-y') \gamma(y-y')\gamma(z-z')\gamma(w-w') \\
&  \qquad \qquad \qquad dydy' dzdz' dwdw' dx_1 dx_2 dx_1' dx_2' dr' d\theta' dr d\theta  dsds'\\
\cA_{4}&=\int_{[0,t]^2} \int_{0<r<\theta<t} \int_{0<r'<\theta'<t}
\int_{B_R^4} \int_{(\bR^d)^6} G_{t-\theta}(x_1-w) G_{\theta-r} (w-z) G_{t-\theta'}(x_1'-w')  \\
& \qquad \qquad \qquad G_{\theta'-r'}(w'-y) G_{t-s}(x_2-z') G_{t-s'}(x_2'-y') \gamma(y-y')\gamma(z-z')\gamma(w-w') \\
&  \qquad \qquad \qquad dydy' dzdz' dwdw' dx_1 dx_2 dx_1' dx_2' dr' d\theta' dr d\theta  dsds'.
\end{align*}

We treat separately the 4 terms. We start with $\cA_{1}$.
We have 10 integrals in the space variables and the integrand is a product of 9 functions. Using the fact that $\int_{\bR^d}G_t(x)dx=t$ and $\|\gamma\|_{L^1(\bR^d)}<\infty$, we integrate the space variables in the order $x_2',y',x_2,z',x_1',y,w',w,z$, using one function ($G$ or $\gamma$) at a time. The remaining integral $dx_1$ on $B_R$ (for which there is no function $G$ or $\gamma$ to integrate) yields the factor ${\rm Leb}(B_R)=\omega_d R^d$. The remaining iterated integral in the 6 time variables is bounded by $t^6$. We obtain that:
\[
\cA_{1} \leq  t^{12} \|\gamma\|_{L^1(\bR^d)}^3 \omega_d R^d.
\]
A similar argument works for $\cA_2,\cA_3,\cA_4$. For $\cA_2$, we use the same order of integration as for $\cA_1$. For $\cA_3,\cA_4$, we integrate in the order $x_2',y',x_2,z',x_1',y,w',z,w$.
This proves \eqref{bound-A-integr}.

\subsection{Proof of Theorem \ref{CLT-integr}.(iii)}
\label{section-proof-iii}

{\em Step 1.} (tightness) We prove that for any $p\geq 2$ and $0<s<t<T$
\[
 \|F_R(t)-F_R(s)\|_p \leq C R^{d/2}(t-s).
\]
By Kolmogorov's continuity theorem, it will follow that the process
$\{F_R(t)\}_{t\geq 0}$ has a continuous modification.

Using the chaos expansion, we can write
$F_R(t)-F_R(s)= \sum_{n\geq 1} I_n (g_{n,R}(\cdot;t,s))$,
where
\begin{align*}
& g_{n,R}(x_1,\ldots,x_n;t,s)
	= \int_{B_R} \big( f_n(x_1,\ldots,x_n,x;t) - f_n(x_1,\ldots,x_n,x;s) \big)dx \nonumber \\
& \quad =\int_{B_R} \int_{T_n(t)} \prod_{j=1}^{n-1} G_{t_{j+1}-t_j} (x_{j+1} - x_j) \big( G_{t-t_n}(x-x_n) - G_{s-t_n}(x-x_n) \big)  d\pmb{t} dx
\end{align*}
and $\pmb{t}=(t_1,\ldots,t_n)$. Here we used the convention $\prod_{j=1}^0 = 1$ and the fact that
\begin{align*}
	\int_{T_n(s)} G_{s-t_n}(x-x_n) \prod_{j=1}^{n-1} G_{t_{j+1}-t_j} (x_{j+1} - x_j) d\pmb{t}
	= \int_{T_n(t)} G_{s-t_n}(x-x_n) \prod_{j=1}^{n-1} G_{t_{j+1}-t_j} (x_{j+1} - x_j) d\pmb{t}.
\end{align*}
since if $\pmb{t} \in T_n(t) \setminus T_n(s)$, then $t_n > s$ and $G_{s-t_n}(x)=0$, due to our convention \eqref{rule1}.

The Fourier transform in the spatial variables of the kernel $g_{n,R}(\cdot;t,s)$ is
\begin{align}
\label{eq-Fourier of g}
\nonumber
& \cF g_{n,R}(\cdot;t,s)(\xi_1,\ldots,\xi_n)= \int_{T_n(t)} \int_{B_R} e^{-ix \cdot (\xi_1 + \ldots + \xi_n)} \prod_{j=1}^{n-1} \cF G_{t_{j+1}-t_j} (\xi_1 + \ldots + \xi_j)\\
& \qquad \qquad \qquad \qquad
	\Big( \cF G_{t-t_n} (\xi_1 + \ldots + \xi_n) - \cF G_{s-t_n} (\xi_1 + \ldots + \xi_n) \Big) dx d\pmb{t}\nonumber \\
&= \cF \mathbf{1}_{B_R} (\xi_1 + \ldots + \xi_n) \int_{T_n(t)} \prod_{j=1}^{n-1} \cF G_{t_{j+1}-t_j} (\xi_1 + \ldots + \xi_j) \nonumber \\
	& \qquad \qquad \qquad \qquad \Big( \cF G_{t-t_n} (\xi_1 + \ldots + \xi_n) - \cF G_{s-t_n} (\xi_1 + \ldots + \xi_n) \Big)d\pmb{t}.
\end{align}

By the triangle inequality and the hypercontractivity property, we have
\begin{align} \label{ineq-||F_R(t)-F_R(s)||}
	\|F_R(t)-F_R(s)\|_p
	\leq \sum_{n\geq 1} (p-1)^{n/2} \| I_n (g_{n,R}(\cdot;t,s)) \|_2
	= \sum_{n\geq 1} (p-1)^{n/2} \left( n! \left\| \widetilde{g}_{n,R}(\cdot;t,s) \right\|_{\cP_0^{\otimes n}}^2 \right)^{1/2}.
\end{align}
Using the Fourier transform for expressing the inner product, \eqref{eq-Fourier of g} and Lemma \ref{Lem-inner product inequality} with $\mu_n(d\xi_1 \ldots d\xi_n) = |\cF \mathbf{1}_{B_R} (\xi_1 + \ldots + \xi_n)|^2 \mu(d\xi) \ldots \mu(d\xi_n)$, we have
\begin{align}
\label{eq-norm of tilde g}
& n! \| \widetilde{g}_{n,R}(\cdot;t,s)\|_{\cP_0^{\otimes n}}=n!\langle g_{n,R}(\cdot;t,s), \widetilde{g}_{n,R}(\cdot;t,s) \rangle_{\cP_0^{\otimes n}} \nonumber \\
\nonumber
& \quad =n! \int_{(\bR^d)^n} \cF g_{n,R}(\cdot;t,s)(\xi_1,\ldots,\xi_n)
\overline{\cF \widetilde{g}_{n,R}(\cdot;t,s)(\xi_1,\ldots,\xi_n)}\mu(d\xi_1) \ldots \mu(d\xi_n)\\
& \quad =\sum_{\rho \in S_n} \int_{(\bR^d)^n} \cF g_{n,R}(\cdot;t,s)(\xi_1,\ldots,\xi_n)
	\overline{\cF g_{n,R}(\cdot;t,s) (\xi_{\rho(1)},\ldots,\xi_{\rho(n)})} \mu(d\xi_1) \ldots \mu(d\xi_n) \nonumber \\
& \quad =\sum_{\rho \in S_n} \int_{(\bR^d)^n}
	\int_{T_n(t)} \int_{T_n(t)}
	\prod_{j=1}^{n-1} \cF G_{t_{j+1}-t_j} (\xi_1 + \ldots + \xi_j) \prod_{j=1}^{n-1} \overline{\cF G_{t_{j+1}'-t_j'} (\xi_{\rho(1)} + \ldots + \xi_{\rho(j)})}   \nonumber \\
	& \quad \quad \quad \Big( \cF G_{t-t_n} (\xi_1 + \ldots + \xi_n) - \cF G_{s-t_n} (\xi_1 + \ldots + \xi_n) \Big) \nonumber \\
	& \quad \quad \quad \overline{\Big( \cF G_{t-t_n'} (\xi_{\rho(1)} + \ldots + \xi_{\rho(n)})-\cF G_{s-t_n'} (\xi_{\rho(1)} + \ldots + \xi_{\rho(n)}) \Big)} d\pmb{t} d\pmb{t'} \nonumber \\
& \quad \quad \quad  |\cF \mathbf{1}_{B_R} (\xi_1 + \ldots + \xi_n)|^2 \mu(d\xi_1) \ldots \mu(d\xi_n)\nonumber \\
	& \quad \leq t^n \int_{(\bR^d)^n} \mu(d\xi_1) \ldots \mu(d\xi_n) |\cF \mathbf{1}_{B_R} (\xi_1 + \ldots + \xi_n)|^2 \int_{T_n(t)} d\pmb{t}
	 \nonumber \\
	& \quad \left| \prod_{j=1}^{n-1} \cF G_{t_{j+1}-t_j} (\xi_1 + \ldots + \xi_j) \right|^2 \left| \cF G_{t-t_n} (\xi_1 + \ldots + \xi_n) - \cF G_{s-t_n} (\xi_1 + \ldots + \xi_n) \right|^2.
\end{align}

Noting that $\cF G_t(\xi) = |\xi|^{-1} \sin (t|\xi|)$ is bounded by $t$ and is a $1$-Lipschitz function in the time variable $t$, uniformly over $\xi \in \bR^d$, we have
\begin{align} \label{ineq-Fourier-1}
	\big| \cF G_{t-t_n} (\xi_1 + \ldots + \xi_n) - \cF G_{s-t_n} (\xi_1 + \ldots + \xi_n) \big|
	\le \begin{cases}
		t-s, \ \mathrm{if} \ t_n \le s, \\
		t-t_n, \ \mathrm{if} \ s< t_n <t
	\end{cases}
	\le t-s.
\end{align}
Substituting \eqref{ineq-Fourier-1} to \eqref{eq-norm of tilde g} and using \cite[Lemma 2.6]{BNQZ}, the fact that $|\cF G_t(\xi)|^2 =\frac{\sin^2(t|\xi|)}{|\xi|^2}\leq D_t \frac{1}{1+|\xi|^2}$ with $D_t=2(t^2 \vee 1)$, and \eqref{max-principle},
we obtain
\begin{align} \label{ineq-g norm bound}
	&n! \left\| \widetilde{g}_{n,R}(\cdot;t,s) \right\|_{\cP_0^{\otimes n}}^2 \nonumber
	\le (t-s)^2 t^n \int_{(\bR^d)^n}\int_{T_n(t)}  |\cF \mathbf{1}_{B_R} (\xi_1 + \ldots + \xi_n)|^2 \nonumber \\
	& \qquad \qquad \qquad \qquad \qquad \times \left| \prod_{j=1}^{n-1} \cF G_{t_{j+1}-t_j} (\xi_1 + \ldots + \xi_j) \right|^2  dt_1\ldots dt_n \mu(d\xi_1) \ldots \mu(d\xi_n) \nonumber \\
	\le& (t-s)^2 t^n \int_{(\bR^d)^n} \mu(d\xi_1) \ldots \mu(d\xi_n) \int_{T_n(t)} dt_1\ldots dt_n |\cF \mathbf{1}_{B_R} (\xi_n)|^2  \left| \prod_{j=1}^{n-1} \cF G_{t_{j+1}-t_j} (\xi_j) \right|^2 \nonumber \\
	\le& (t-s)^2 t^n \int_{T_n(t)} dt_1\ldots dt_n \int_{\bR^d} |\cF \mathbf{1}_{B_R}(\xi)|^2 \mu(d\xi) \left( D_t \int_{\bR^d} \dfrac{\mu(d\xi)}{1+|\xi|^2} \right)^{n-1} \nonumber \\
	=& \dfrac{(t-s)^2 t^{2n} D_t^{n-1} C_{\mu}^{n-1}}{n!} \int_{\bR^d} |\cF \mathbf{1}_{B_R}(\xi)|^2 \mu(d\xi).
\end{align}
Since $\gamma$ is non-negative and belongs to $L^1(\bR^d)$, we have
\begin{align} \label{ineq-fourier-1_B_R}
	\int_{\bR^d} |\cF \mathbf{1}_{B_R}(\xi)|^2 \mu(d\xi)
	= \int_{(\bR^d)^2} \mathbf{1}_{B_R}(x) \mathbf{1}_{B_R}(y) \gamma(x-y) dxdy
	\le C_{\gamma,d}R^{d/2}
\end{align}
where $C_{\gamma,d}=\|\gamma\|_{L^1(\bR^d)} \omega_d$. By \eqref{ineq-||F_R(t)-F_R(s)||}, \eqref{ineq-g norm bound} and \eqref{ineq-fourier-1_B_R}, we obtain
\begin{align*}
	\|F_R(t)-F_R(s)\|_p
	\lesssim \sum_{n\geq 1} (p-1)^{n/2} \dfrac{(t-s) t^n (D_t C_{\mu})^{\frac{n-1}{2}}}{\sqrt{n!}} \sqrt{C_{\gamma,d} R^d}
\leq C(t-s) R^{d/2},
\end{align*}
where $C$ is a positive constant that only depends on $T, \mu, d,p$.

\medskip

{\em Step 2.} (finite-dimensional convergence)
Let $Q_R(t)=R^{-d/2}F_R(t)$. Fix $T>0$. We have to show that for any $m \in \bN_+$, $0\leq t_1<\ldots<t_m\leq T$,
\begin{equation}
\label{fdd-Q}
(Q_R(t_1),\ldots,Q_R(t_m)) \stackrel{d}{\to} (\cG(t_1),\ldots,\cG(t_m))
\end{equation}

By relation \eqref{F-deltaD}, $F_R(t_i)=\delta(-DL^{-1}F_R(t_i))$ and hence,
\begin{equation}
\label{repr-Q}
Q_R(t_i)=\delta(-{R^{-d/2}}DL^{-1}F_R(t_i)).
\end{equation}
Let
\begin{align*}
	\mathcal{C}_{ij}=\bE[\cG(t_i)\cG(t_j)]=\omega_d \int_{\bR^d}\rho_{t_i,t_j}(x)dx
	= \omega_d \sum_{n\geq 1} \dfrac{1}{n!} \int_{\bR^d} \alpha_n(x;t_i,t_j) dx
\end{align*}
We use representation \eqref{repr-Q}. By Theorem 6.1.2 of \cite{NP}, for any continuous function $h:\bR^m \to \bR$ with bounded second derivatives, we have:
\begin{align}
\nonumber
&
\Big|\bE[h(Q_R(t_1),\ldots,Q_R(t_m))]-\bE[h(\cG(t_1),\ldots,\cG(t_m))]\Big| \\
\nonumber
& \quad \quad \quad \leq \frac{m}{2}\|h''\|_{\infty} \sqrt{\sum_{i,j=1}^m\bE\Big|
\langle D Q_R(t_i), -R^{-d/2} DL^{-1} F_R(t_j)\rangle_{\cP_0}-\mathcal{C}_{ij}\Big|^2}\\
\label{th6-1-2}
& \quad \quad \quad = \frac{m}{2}\|h''\|_{\infty} \sqrt{\sum_{i,j=1}^m\bE\Big|
\frac{1}{R^{d}} \langle D F_R(t_i),  -DL^{-1} F_R(t_j)\rangle_{\cP_0}-\mathcal{C}_{ij}\Big|^2}
\end{align}
where for the last line we used the fact that
$DQ_R(t_i)=R^{-d/2} F_R(t_i)$. It suffices to show that for any $i,j=1,\ldots,m$
\begin{equation}
\label{fdd-Q1}
\bE\left|\frac{1}{R^{d}}
\langle D F_R(t_i), -DL^{-1} F_R(t_j) \rangle_{\cP_0}-\mathcal{C}_{ij}\right|^2 \to 0 \quad \mbox{as} \quad R \to \infty.
\end{equation}
Then \eqref{fdd-Q} follows by applying \eqref{th6-1-2} to the function $h(x_1,\ldots,x_m)=\exp(-i\sum_{j=1}^m u_j x_j)$ for arbitrary $u_1,\ldots,u_m \in \bR$.

We now prove \eqref{fdd-Q1}. Fix $i,j\leq m$ and let $X_{R,ij}= R^{-d}
\langle D F_R(t_i), -DL^{-1}F_R(t_j) \rangle_{\cP_0}$. Note that
\[
\bE|X_{R,ij}-\mathcal{C}_{ij}|^2 \leq 2 \{ {\rm Var}(X_{R,ij})+|\bE(X_{R,ij})-\mathcal{C}_{ij}|^2\}
\]
and $\bE(X_{R,ij})=R^{-d} \bE[F_R(t_i)F_R(t_j)] \to \mathcal{C}_{ij}$ as $R \to \infty$, by duality and part (i). We will prove below that
\begin{equation}
\label{Var-Fij}
{\rm Var}\Big(\langle D F_R(t_i), -DL^{-1} F_R(t_j) \rangle_{\cP_0}\Big) \leq C R^d.
\end{equation}
Then ${\rm Var}(X_{R,ij}) \leq C R^{-d}$ and relation \eqref{fdd-Q1} follows.

To prove \eqref{Var-Fij}, we apply a version of Proposition 1.9 of \cite{BNQZ} for the time-independent noise. We obtain:
\[
{\rm Var}\Big(\langle D F_R(t_i), -DL^{-1} F_R(t_j) \rangle_{\cP_0}\Big)  \leq (T_1+T_2),
\]
where
\begin{align*}
T_1&=\int_{(\bR^d)^6} \|D_{z,w}^2 F_R(t_i)\|_4 \|D_{y,w'}^2 F_R(t_i)\|_4
\|D_{z'} F_R(t_j)\|_4 \|D_{y'} F_R(t_j)\|_4 \\
& \qquad \qquad \qquad \gamma(y-y') \gamma(z-z') \gamma(w-w') dydy'dzdz' dwdw'
\end{align*}
and $T_2$ has a similar expression by switching the roles of $F_R(t_i)$ and $F_R(t_j)$. Similarly to \eqref{bound-A-integr}, it can be proved that $T_1 \leq CR^d$ and $T_2 \leq CR^d$. This proves \eqref{Var-Fij}.

\section{Proof of Theorem \ref{CLT-Riesz}}
\label{section-CLT2}

In this section, we give the proof of Theorem \ref{CLT-Riesz}. Recall
that the Riesz kernel $\gamma(x) = |x|^{-\beta}$ with $\beta \in (0,d)$ is the Fourier transform of the measure $\mu(d\xi) = c_{d,\beta} |\xi|^{-(d-\beta)} d\xi$. This is Example \ref{ex}.3.
Condition (D) holds since $\beta<2$.

\subsection{Proof of Theorem \ref{CLT-Riesz}.(i)}
\label{section-proof-i'}

{\em Step 1.}
Recalling \eqref{eq-chaos expansion}, we have the Wiener chaos expansion $F_R(t) = \sum_{n\geq 1} {\bf J}_{n,R} (t)$, with
\begin{align*}
	{\bf J}_{n,R} (t) = I_n(g_{n,R}(\cdot;t))  \quad \mbox{and} \quad g_{n,R}(\cdot;t)=\int_{B_R} f_n(\cdot,x;t) dx.
\end{align*}
By orthogonality of the Wiener chaos spaces,
\[
\bE[F_R(t) F_R(s)]=
\sum_{n\geq 1}\bE \left[ {\bf J}_{n,R} (t) {\bf J}_{n,R} (s)   \right].
\]
We will prove the that only the projection on the first chaos contributes to the limit.

\medskip

{\em Step 2.} We first consider ${\bf J}_{1,R}(t)$. For any $t>0,s>0$, we have
\begin{align*}
	\bE \left[ {\bf J}_{1,R} (t) {\bf J}_{1,R} (s) \right]
	=& \langle g_{1,R}(\cdot;t),g_{1,R}(\cdot;s) \rangle_{\cP_0} \\
	=&\int_{B_R^2} dxdx' \int_{(\bR^d)^2} f_1(x_1,x;t) f_1(x_1',x';s) \gamma(x_1-x_1') dx_1 dx_1' \\
	=& \int_{B_R^2} dxdx' \int_{\bR^d} \cF f_1(\cdot,x;t)(\xi) \overline{\cF f_1(\cdot,x';s)(\xi)} \mu(d\xi) \\
	=& \int_{B_R^2} dxdx' \int_0^t dt_1 \int_0^s dt_1' \int_{\bR^d} e^{-i\xi \cdot (x-x')} \cF G_{t-t_1}(\xi) \overline{\cF G_{s-t_1'}(\xi)} \mu(d\xi).
\end{align*}
Applying the change of variables $(x,x',\xi) \to (Rx,Rx',\xi/R)$, we get
\begin{align*}
	\bE \left[ {\bf J}_{1,R} (t) {\bf J}_{1,R} (s) \right]
	=& R^{2d-\beta} \int_{B_1^2} dxdx' \int_0^t dt_1 \int_0^s dt_1' \int_{\bR^d} e^{-i\xi \cdot (x-x')} \cF G_{t-t_1}(\xi/R) \overline{\cF G_{s-t_1'}(\xi/R)} \mu(d\xi).
\end{align*}
Note that for $r>0$, $\cF G_r(\xi/R)$ is uniformly bounded and convergent to $r$ as $R \to + \infty$. Besides, we have
\begin{align}
\label{eq-Fourier-1}
	\int_{B_1^2} e^{-i\xi \cdot (x-x')} dxdx'
	= \left| \cF \mathbf{1}_{B_1} (\xi) \right|^2.
\end{align}
Hence, by Fubini's theorem and dominated convergence theorem, we have
\begin{align} \label{lim-J_1}
	\lim_{R \to +\infty} \dfrac{\bE \left[ {\bf J}_{1,R} (t) {\bf J}_{1,R} (s) \right]}{R^{2d-\beta}}
	=& \int_0^t (t-t_1) dt_1 \int_0^s (s-t_1') dt_1' \int_{\bR^d} \left| \cF \mathbf{1}_{B_1} (\xi) \right|^2 \mu(d\xi) \nonumber \\
	=& \int_0^t (t-t_1) dt_1 \int_0^s (s-t_1') dt_1' \int_{B_1^2} \gamma(x-x') dxdx' \nonumber \\
	=& \dfrac{t^2s^2}{4} \int_{B_1^2} |x-x'|^{-\beta} dxdx'=:\dfrac{t^2s^2}{4}\kappa_{\beta,d}.
\end{align}

{\em Step 3.} We consider ${\bf J}_{n,R}(t)$ for $n \ge 2$. 
\begin{align*}
	\bE \left[ {\bf J}_{n,R}^2 (t)  \right]&= n! \| \widetilde{g}_n(\cdot;t)\|_{\cP_0^{\otimes n}}^2 =n! \langle  g_n(\cdot;t), \widetilde{g}_n(\cdot;t) \rangle_{\cP_0^{\otimes n}}\\
=& n! \int_{(\bR^d)^n} \cF g_n(\cdot;t)(\xi_1,\ldots,\xi_n)
\overline{\cF g_n(\cdot;t)(\xi_1,\ldots,\xi_n)} \mu(d\xi_1) \ldots \mu(d\xi_n) \\
	=& n!  \int_{B_R^2} \int_{(\bR^d)^n}  \cF f_n(\cdot,x;t) (\xi_1, \ldots, \xi_n) \overline{\cF \widetilde{f}_n(\cdot,x';t)(\xi_1, \ldots, \xi_n)}  \mu(d\xi_1) \ldots \mu(d\xi_n) dxdx'  \\
	=& \sum_{\rho \in \Sigma_n} \int_{B_R^2} dxdx' \int_{(\bR^d)^n} \mu(d\xi_1) \ldots \mu(d\xi_n) \int_{T_n(t)} dt_1 \ldots dt_n \int_{T_n(t)} dt_1' \ldots dt_n' \\
	& \times e^{-i(\xi_1+\ldots+\xi_n)\cdot (x-x')} \prod_{j=1}^n \cF G_{t_{j+1}-t_j}(\xi_1+\ldots+\xi_j) \overline{\cF G_{t_{j+1}'-t_j'}(\xi_{\rho(1)}+\ldots+\xi_{\rho(j)})}.
\end{align*}
Here, we use the convention $t_{n+1} = t_{n+1}' = t$.
Using \eqref{eq-Fourier-1} and Lemma \ref{Lem-inner product inequality} for $\mu_n(d\xi_1,\ldots,d\xi_n) = \left| \cF \mathbf{1}_{B_R} (\xi_1+\ldots+\xi_n) \right|^2 \mu(d\xi_1) \ldots \mu(d\xi_n)$, we obtain
\begin{align*}
	\bE \left[{\bf J}_{n,R}^2 (t) \right]
	\le& t^n \int_{T_n(t)} dt_1 \ldots dt_n \int_{(\bR^d)^n} \left( \int_{B_R^2} e^{-i(\xi_1+\ldots+\xi_n) \cdot (x-x')} dxdx' \right) \mu(d\xi_1) \ldots \mu(d\xi_n) \\
	& \times \prod_{j=1}^n \left| \cF G_{t_{j+1}-t_j}(\xi_1+\ldots+\xi_j) \right|^2.
\end{align*}
Noting that $\mu(d\xi) = c_{d,\beta} |\xi|^{\beta-d} d\xi$, we apply the change of the variables for $\eta_j = \xi_1+\ldots+\xi_j$, and then for $(x,x',\eta_n) \to (Rx,Rx',\eta_n/R)$, we have
\begin{align}
\label{ineq-EJ^2 bound}
	\bE \left[ {\bf J}_{n,R}^2 (t)  \right]
	\le& c_{d,\beta}^n t^n \int_{T_n(t)} dt_1 \ldots dt_n \int_{(\bR^d)^n} \left( \int_{B_R^2} e^{-i\eta_n \cdot (x-x')} dxdx' \right) \nonumber \\
	& \times \prod_{j=1}^n \left| \cF G_{t_{j+1}-t_j}(\eta_j) \right|^2 \prod_{j=1}^n |\eta_j-\eta_{j-1}|^{\beta-d} d\eta_j \nonumber \\
	\le& c_{d,\beta}^n t^n R^{2d-\beta} \int_{T_n(t)} dt_1 \ldots dt_n \int_{(\bR^d)^n} \left( \int_{B_1^2} e^{-i\eta_n \cdot (x-x')} dxdx' \right) \prod_{j=1}^{n-1} \left| \cF G_{t_{j+1}-t_j}(\eta_j) \right|^2 \nonumber \\
	& \times \left| \cF G_{t-t_n}(\eta_n/R) \right|^2 \prod_{j=1}^{n-1} |\eta_j-\eta_{j-1}|^{\beta-d} |\eta_n -\eta_{n-1}R|^{\beta-d} d\eta_1 \ldots d\eta_n.
\end{align}
Here, we use the convention $\eta_0=0$.

We use the fact that $| \cF G_{t-t_n}(\eta_n/R) |^2 \leq t^2$. Then
we integrate $d\eta_n$ using the following fact:
\[
c_{d,\beta}\int_{\bR^d}\int_{B_1^2}e^{-i\eta_n \cdot (x-x')} |\eta_n-R \eta_{n-1}|^{-(d-\beta)}dx dx' d\eta_n=\int_{B_1^2}
e^{-i R\eta_{n-1} \cdot (x-x')} |x-x'|^{-\beta}dxdx'.
\]
See the last part of the proof of part (2) of Proposition 4.1 of \cite{BNQZ}. Hence
\begin{align*}
\frac{1}{R^{2d-\beta}}\bE \left[ {\bf J}_{n,R}^2 (t)  \right] &\leq  t^{n+2}c_{d,\beta}^{n-1} \int_{T_n(t)} \int_{(\bR^d)^{n-1}} \prod_{j=1}^{n-1}|\eta_j-\eta_{j-1}|^{-(d-\beta)} \prod_{j=1}^{n-1}|\cF G_{t_{j+1}-t_j}(\eta_j)|^2\\
& \quad \left( \int_{B_1^2}
e^{-i R\eta_{n-1} \cdot (x-x')} |x-x'|^{-\beta}dxdx'\right) d\eta_1 \ldots d\eta_{n-1}dt_1 \ldots dt_n.
\end{align*}
If $\eta_{n-1}\not=0$, the integral
$\int_{B_1^2} e^{-iR \eta_{n-1} \cdot (x-x')} |x-x'|^{-\beta} dxdx'$ converges to $0$ as $R \to \infty$
by Riemann-Lebesgue's lemma, and is bounded by $\kappa_{\beta,d}$.
Note that the integral
\[
\int_{T_n(t)} \int_{(\bR^d)^{n-1}} \prod_{j=1}^{n-1}|\eta_j-\eta_{j-1}|^{-(d-\beta)} \prod_{j=1}^{n-1}|\cF G_{t_{j+1}-t_j}(\eta_j)|^2d\eta_1 \ldots d\eta_{n-1}dt_1 \ldots dt_n
\]
coincides with the integral $Q_{n-1}$ given by (4.16) of \cite{BNQZ}, and $Q_{n-1}\leq C^{n}/n!$ (see the equation on display after (4.17) of \cite{BNQZ}). Hence
$
\frac{1}{R^{2d-\beta}}\bE \left[ {\bf J}_{n,R}^2 (t)  \right]$ converges to $0$ as $R \to \infty$ and is bounded by
$t^{n+2}c_{d,\beta}^{n-1} \kappa_{\beta,d}\frac{C^n}{n!}$. By the dominated convergence theorem,
\begin{equation}
\label{conv-Jn}
\frac{1}{R^{2d-\beta}}\sum_{n\geq 2}\bE[J_{n,t}^2(R)] \to 0 \quad \mbox{as} \quad R \to \infty.
\end{equation}
By Cauchy-Schwarz inequality and the dominated convergence theorem,
\[
\frac{1}{R^{2d-\beta}}\sum_{n\geq 2} |\bE[J_{n,R}(t)J_{n,R}(s)]|\to 0 \quad \mbox{as} \quad R \to \infty.
\]

\subsection{Proof of Theorem \ref{CLT-Riesz}.(ii)}
\label{section-proof-ii'}

We use again \eqref{dTV-bound}. By part {\em (i)}, $\sigma_R^2(t) \sim K'(t,t) R^{2d-\beta}$. So it is enough to prove that
\begin{equation}
\label{bound-A-Riesz}
\cA \leq C R^{4d-3\beta},
\end{equation}
where $C>0$ is a constant depending on $(t,\gamma,d)$. For this, we use again inequality \eqref{4terms-A}. We examine $\cA_1$.

Noting that $G_t(Rx) = R^{1-d} G_{t/R}(x)$ and $\gamma(Rx) = R^{-\beta} \gamma(x)$, we change the variables
\begin{align}
\label{ch-var}
	(y,y',z,z',w,w',x_1,x_2,x_1',x_2') \to (Ry,Ry',Rz,Rz',Rw,Rw',Rx_1,Rx_2,Rx_1',Rx_2').
\end{align}
We obtain that:
\begin{align*}
\cA_{1}&=R^{6+4d-3\beta}\int_{[0,t]^2} \int_{0<\theta<r<t} \int_{0<\theta'<r'<t}
\int_{B_1^4} \int_{(\bR^d)^6} G_{\frac{t-r}{R}}(x_1-z) G_{\frac{r-\theta}{R}} (z-w) G_{\frac{t-r'}{R}}(x_1'-y) \\
& \qquad \qquad \qquad G_{\frac{r'-\theta'}{R}}(y-w')  G_{\frac{t-s}{R}}(x_2-z') G_{\frac{t-s'}{R}}(x_2'-y') \gamma(y-y')\gamma(z-z')\gamma(w-w') '\\
&  \qquad \qquad \qquad  dydy' dzdz' dwdw' dx_1 dx_2 dx_1' dx_2 dr' d\theta' dr d\theta  dsds'.
\end{align*}

Recall the definition \eqref{Green} of $G$, the integrand above is non-zero only when
\begin{align*}
	|x_1-z|, |x_2-z'|, |x_1'-y|, |x_2'-y'|, |z-w|, |y-w'| \le \dfrac{t}{R}.
\end{align*}
Moreover, the four variables $x_1, x_2, x_1', x_2'$ should be in $B_1$. By triangle inequality, when $R \ge t$, one can deduce that the integral domain $\bR^d$ for the variables $y,y',z,z',w,w'$ can be replaced by the ball $B_{3}$ in $\bR^d$, for all $R \geq t$. Using the following facts
\begin{align*}
	\int_{\bR^d}G_t(x)dx=t \quad \mbox{and} \quad
	\sup_{x \in B_{3}} \int_{B_{3}} \gamma(x-y) dy
	\le \int_{B_{6}} \gamma(y) dy =:D_{\gamma}<\infty,
\end{align*}
when $R \ge t$, we integrate the space variables in the order $x_2', y', x_1', y,w',x_2,z',w,z$, using one function ($G$ or $\gamma$) at a time. The remaining integral $dx_1$ on $B_1$ yields the constant ${\rm Leb}(B_1)=\omega_d$.
The remaining iterated integral in the 6 time variables is bounded by $t^6$. We obtain that:
\[
\cA_1 \leq R^{6+4d-3\beta}C_{\gamma}^3 \left( \frac{t}{R}\right)^6 \omega_d t^6=C R^{4d-3\beta}.
\]
Similarly, it can be proved that $\cA_j \leq C R^{4d-3\beta}$ for $j=2,3,4$. This proves \eqref{bound-A-Riesz}.

\subsection{Proof of Theorem \ref{CLT-Riesz}.(iii)}
\label{section-proof-iii'}

{\em Step 1.} (tightness).
We will prove that
\[
\|F_R(t)-F_R(s)\|_p
	\leq
C(t-s) R^{d-\beta/2},
\]
where $C$ is a positive constant that only depends on $T, \mu, d,p$.
By Kolmogorov's continuity theorem, it will follow that the process
$\{F_R(t)\}_{t\geq 0}$ has a continuous modification.

The formulas \eqref{ineq-||F_R(t)-F_R(s)||} and \eqref{ineq-g norm bound} still hold. Recalling that $\gamma(x) = |x|^{-\beta}$, making change of variables, we have
\begin{align} \label{ineq-fourier-1_B_R-Riesz}
	\int_{\bR^d} |\cF \mathbf{1}_{B_R}(\xi)|^2 \mu(d\xi)
	=& \int_{(\bR^d)^2} \mathbf{1}_{B_R}(x) \mathbf{1}_{B_R}(y) \gamma(x-y) dxdy \nonumber \\
	=& R^{2d} \int_{(\bR^d)^2} \mathbf{1}_{B_1}(x) \mathbf{1}_{B_1}(y) \gamma(Rx-Ry) dxdy \nonumber \\
	=& R^{2d-\beta} \int_{(\bR^d)^2} \mathbf{1}_{B_1}(x) \mathbf{1}_{B_1}(y) \gamma(x-y) dxdy
	=C_{\gamma,d}' R^{2d-\beta}.
\end{align}
Here, $C_{\gamma,d}'$ is a positive constant that only depends on $\gamma,d$. By \eqref{ineq-||F_R(t)-F_R(s)||}, \eqref{ineq-g norm bound} and \eqref{ineq-fourier-1_B_R-Riesz}, we have
\begin{equation}
\label{tight2}
	\|F_R(t)-F_R(s)\|_p
	\leq \sum_{n\geq 1} (p-1)^{n/2} \dfrac{(t-s) t^n (D_t C_{\mu})^{\frac{n-1}{2}}}{\sqrt{n!}} \sqrt{C_{\gamma,d}' R^{2d-\beta}}
\leq
C(t-s) R^{d-\beta/2},
\end{equation}
where $C$ is a positive constant that only depends on $T, \mu, d,p$.

\medskip

{\em Step 2.} (finite-dimensional convergence)
We have to prove that for any $0\leq t_1<\ldots<t_m\leq T$,
\[
\left(\frac{F_R(t_1)}{R^{d-\beta/2}},\ldots,
\frac{F_R(t_m)}{R^{d-\beta/2}}\right)
\stackrel{d}{\to} (\cG(t_1),\ldots,\cG(t_m)) \quad \mbox{as} \quad R \to \infty.
\]
There are two methods for proving this. The first method is similar to the proof of  Theorem \ref{CLT-integr}.(iii) given in Section \ref{section-proof-iii} above, based on the bound:
\[
{\rm Var}\Big(\langle DF_R(t_i), -D L^{-1}F_R(t_j)\rangle_{\cP_0} \Big) \leq C R^{4d-3\beta}.
\]

The second method is faster and uses the domination of the first chaos. We explain this below. Using the chaos expansion of $F_R(t_i)$ for $i=1,\ldots,k$, we write
\begin{align*}
\left(\frac{F_R(t_1)}{R^{d-\beta/2}},\ldots,
\frac{F_R(t_m)}{R^{d-\beta/2}}\right)=\left(\frac{J_{1,R}(t_1)}{R^{d-\beta/2}},\ldots,
\frac{J_{1,R}(t_m)}{R^{d-\beta/2}}\right)+\frac{1}{R^{d-\beta/2}}
\sum_{n\geq 2}(J_{n,R}(t_1),\ldots,J_{n,R}(t_m)).
\end{align*}
The first vector is Gaussian (since it belongs to the first chaos space) and converges in distribution as $R \to \infty$ to $(\cG(t_1),\ldots,\cG(t_m))$ since the covariances converge (as shown in part (i)).
The second term converges to $0$ in $L^2(\Omega;\bR^m)$ as $R \to \infty$, due to \eqref{conv-Jn}.

\vspace{3mm}

\appendix
\section{Auxiliary results}

In this section, we give some auxiliary results which are used in this article.
In the case $d=2$, the function $G$ has the following properties: for any $t>0$ and $x \in \bR^2$,
\begin{equation}
\label{p-norm-G}
\|G_t\|_{L^p(\bR^2)}^p=\frac{(2\pi)^{1-p}}{2-p}t^{2-p}\quad \mbox{for all $p \in (0,2)$},
\end{equation}
\begin{equation}
\label{G-monotone}
G_t^{p}(x) \leq (2\pi t)^{q-p} G_t^{q}(x) \quad \mbox{for all $0<p<q$}
\end{equation}
\begin{equation}
\label{I-less-G}
1_{\{|x|<t\}}\leq 2\pi t G_t(x).
\end{equation}

The following theorem is analogous to Lemma 2.5 of \cite{BNQZ}.

\begin{lemma} \label{Lem-inner product inequality}
Let $\mu_n$ be a symmetric measure on $(\bR^d)^n$ for some integer $n\geq1 $. Then for any $0<s\leq t$,
\begin{align*}
	& \sum_{\rho \in \Sigma_n} \int_{T_n(t)} dt_1 \ldots dt_n \int_{T_n(s)} ds_1 \ldots ds_n \int_{(\bR^d)^n} \mu_n(d\xi_1 \ldots d\xi_n) \nonumber \\
	& h(t_1,\ldots,t_n,\xi_1,\ldots,\xi_n) h(s_1,\ldots,s_n,\xi_{\rho(1)},\ldots,\xi_{\rho(n)}) \nonumber \\
	\le& \dfrac{s^n+t^n}{2} \int_{T_n(t)} dt_1 \ldots dt_n \int_{(\bR^d)^n} |h(t_1,\ldots,t_n,\xi_1,\ldots,\xi_n)|^2 \mu_n(d\xi_1 \ldots d\xi_n),
\end{align*}
for any measurable non-negative function $h$ for which the above integral makes sense.
\end{lemma}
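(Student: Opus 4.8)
The plan is to collapse the permutation-indexed integral over the two time-simplices into a single integral by two applications of the Cauchy--Schwarz inequality: first in the spectral variables against $\mu_n$, then in the time variables against Lebesgue measure on the simplex. Since $h\ge 0$, every interchange of sum and integral below is justified by Tonelli's theorem, and the symmetry of $\mu_n$ is exactly what is needed to eliminate the permutation $\rho$.

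First I would fix $\rho\in\Sigma_n$ together with $\mathbf t=(t_1,\dots,t_n)\in T_n(t)$ and $\mathbf s=(s_1,\dots,s_n)\in T_n(s)$, and apply the Cauchy--Schwarz inequality in $(\xi_1,\dots,\xi_n)$ with respect to $\mu_n$. Writing $\Phi(\mathbf r)^2:=\int_{(\bR^d)^n}h(r_1,\dots,r_n,\xi_1,\dots,\xi_n)^2\,\mu_n(d\xi_1\ldots d\xi_n)$, this yields
\[
\int_{(\bR^d)^n} h(\mathbf t,\xi_1,\dots,\xi_n)\,h(\mathbf s,\xi_{\rho(1)},\dots,\xi_{\rho(n)})\,\mu_n(d\xi_1\ldots d\xi_n)\ \le\ \Phi(\mathbf t)\,\Phi_\rho(\mathbf s),
\]
where $\Phi_\rho(\mathbf s)^2=\int_{(\bR^d)^n}h(\mathbf s,\xi_{\rho(1)},\dots,\xi_{\rho(n)})^2\,\mu_n(d\xi_1\ldots d\xi_n)$. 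By the symmetry of $\mu_n$, the change of variables that permutes the $\xi_i$'s shows $\Phi_\rho=\Phi$ for every $\rho$. Integrating this pointwise bound over $T_n(t)\times T_n(s)$ and summing over the $n!$ permutations (the summand no longer depending on $\rho$), the left-hand side of the lemma is at most $n!\,\big(\int_{T_n(t)}\Phi(\mathbf t)\,d\mathbf t\big)\big(\int_{T_n(s)}\Phi(\mathbf s)\,d\mathbf s\big)$.

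Next I would estimate each of these two factors by Cauchy--Schwarz against Lebesgue measure, using that $T_n(t)$ has volume $t^n/n!$, that $T_n(s)$ has volume $s^n/n!$, and that $T_n(s)\subseteq T_n(t)$ since $s\le t$ (together with $\Phi\ge 0$):
\[
\int_{T_n(t)}\Phi\ \le\ \Big(\frac{t^n}{n!}\Big)^{1/2}\Big(\int_{T_n(t)}\Phi^2\Big)^{1/2},\qquad
\int_{T_n(s)}\Phi\ \le\ \Big(\frac{s^n}{n!}\Big)^{1/2}\Big(\int_{T_n(t)}\Phi^2\Big)^{1/2}.
\]
Multiplying these together with the factor $n!$, the factorials cancel and the bound becomes $\sqrt{(st)^n}\,\int_{T_n(t)}\Phi(\mathbf t)^2\,d\mathbf t\ \le\ \tfrac{1}{2}(s^n+t^n)\int_{T_n(t)}\Phi(\mathbf t)^2\,d\mathbf t$ by the arithmetic--geometric mean inequality. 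Since $\int_{T_n(t)}\Phi(\mathbf t)^2\,d\mathbf t=\int_{T_n(t)}\int_{(\bR^d)^n}|h(t_1,\dots,t_n,\xi_1,\dots,\xi_n)|^2\,\mu_n(d\xi_1\ldots d\xi_n)\,dt_1\ldots dt_n$ is precisely the right-hand side of the asserted inequality, the proof is complete.

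I do not expect a genuine obstacle: the statement is a clean exercise in Cauchy--Schwarz and is the exact analogue, for a purely spatial covariance, of Lemma 2.5 of \cite{BNQZ}. The only steps that truly use the hypotheses are the reduction $\Phi_\rho=\Phi$ (which needs $\mu_n$ symmetric) and the monotonicity $\int_{T_n(s)}\Phi^2\le\int_{T_n(t)}\Phi^2$ (which needs $s\le t$ and $\Phi\ge 0$); keeping careful track of which measures the two Cauchy--Schwarz inequalities are applied against is the only place where a slip could occur.
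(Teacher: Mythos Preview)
Your proof is correct. It differs slightly from the paper's argument, which proceeds more directly: the paper applies the pointwise inequality $2ab\le a^2+b^2$ to the integrand $h(\mathbf t,\xi)\,h(\mathbf s,\xi_\rho)$ (rather than Cauchy--Schwarz), then integrates; the symmetry of $\mu_n$ removes $\rho$, the redundant time integrals produce the factors $t^n/n!$ and $s^n/n!$ that cancel the sum over $\Sigma_n$, and the inclusion $T_n(s)\subset T_n(t)$ finishes the job in one line. Your route---Cauchy--Schwarz in $\xi$, then Cauchy--Schwarz in time, then AM--GM on $\sqrt{(st)^n}\le\tfrac12(s^n+t^n)$---is a perfectly valid alternative that reaches the same constant; it is a touch longer but makes the decoupling of the $\xi$-, $\mathbf t$-, and $\mathbf s$-integrations more explicit. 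Either way the two genuine hypotheses (symmetry of $\mu_n$ and $s\le t$) enter at the same places you identified.
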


\begin{proof}
Using the inequality $2ab \le |a|^2 + |b|^2$ and the symmetry of $\mu_n$, we have
\begin{align*}
	& \sum_{\rho \in \Sigma_n} \int_{T_n(t)} dt_1 \ldots dt_n \int_{T_n(s)} ds_1 \ldots ds_n \int_{(\bR^d)^n} \mu_n(d\xi_1 \ldots d\xi_n) \nonumber \\
	& h(t_1,\ldots,t_n,\xi_1,\ldots,\xi_n) h(s_1,\ldots,s_n,\xi_{\rho(1)},\ldots,\xi_{\rho(n)}) \nonumber \\
	\le& \dfrac{1}{2} \sum_{\rho \in \Sigma_n} \int_{T_n(t)} dt_1 \ldots dt_n \int_{T_n(s)} ds_1 \ldots ds_n \int_{(\bR^d)^n} |h(s_1,\ldots,s_n,\xi_{\rho(1)},\ldots,\xi_{\rho(n)})|^2 \mu_n(d\xi_1 \ldots d\xi_n) \nonumber \\
	&+ \dfrac{1}{2} \sum_{\rho \in \Sigma_n} \int_{T_n(t)} dt_1 \ldots dt_n \int_{T_n(s)} ds_1 \ldots ds_n \int_{(\bR^d)^n} |h(t_1,\ldots,t_n,\xi_1,\ldots,\xi_n)|^2 \mu_n(d\xi_1 \ldots d\xi_n) \nonumber \\
	\leq & \dfrac{s^n+t^n}{2} \int_{T_n(t)} dt_1 \ldots dt_n \int_{(\bR^d)^n} |h(t_1,\ldots,t_n,\xi_1,\ldots,\xi_n)|^2 \mu_n(d\xi_1 \ldots d\xi_n).
\end{align*}
\end{proof}

\noindent \footnotesize{{\em Acknowledgement.} The authors are grateful Guangqu Zheng for pointing out a simplified argument for proving parts {\em (ii)} of Theorems \ref{CLT-integr} and \ref{CLT-Riesz}, based on Proposition 1.8 of \cite{BNQZ}, instead of the classical Stein-Malliavin bound which was used in the first version of the manuscript.

\normalsize{

\end{document}